\documentclass[12pt, reqno]{article}
\usepackage{fullpage}
\usepackage{psfrag}
\usepackage[parfill]{parskip}
\usepackage{float}
\usepackage{graphicx}
\usepackage{epsfig}
\usepackage{amsmath, amsthm, amssymb}
\usepackage{verbatim}
\usepackage{multicol}
\usepackage{url}
\usepackage{latexsym}
\usepackage{mathtools}
\usepackage{mathrsfs}
\usepackage[colorlinks = true,
            linkcolor  = blue,
            citecolor  =blue,
            urlcolor   =blue,
            bookmarks  = true]{hyperref}
 \usepackage{enumitem}
\setlist[enumerate,1]{label=(\roman*)}
\usepackage[normalem]{ulem}
\usepackage{bm}
\usepackage{dsfont}
\usepackage{stmaryrd}
\usepackage[dvipsnames]{xcolor}

\usepackage[backend=biber, style=numeric, citestyle=numeric-comp, giveninits=true, doi=false, isbn=false, url=false, eprint=false, maxbibnames=99]{biblatex}
\DeclareFieldFormat{labelnumberwidth}{\mkbibbrackets{#1}}
\DeclareNameAlias{author}{given-family}
\DeclareNameAlias{editor}{given-family}

\DeclareFieldFormat[article,inproceedings,incollection,misc,unpublished,book]{title}{#1}
\DeclareFieldFormat[article]{journaltitle}{#1}
\DeclareFieldFormat[article]{volume}{#1}

\AtEveryBibitem{%
  \clearfield{issn}
  \clearfield{url}
  \clearfield{eprint}
  \clearfield{doi}
  \clearfield{isbn}
  \clearfield{number}
  \clearfield{mrnumber}
  \clearfield{mrclass}
  \clearfield{reviewer}
}

\renewbibmacro{in:}{}

\renewbibmacro*{volume+number+eid}{%
  \printfield{volume}%
  \setunit{\addcomma\space}%
  \printfield{eid}}

\DeclareDelimFormat{namedelim}{\addcomma\space}

\DeclareFieldFormat{pages}{#1}

\numberwithin{equation}{section}

\theoremstyle{plain}

\newtheorem{theorem}{Theorem}[section]
\newtheorem{lemma}[theorem]{Lemma}
\newtheorem{proposition}[theorem]{Proposition}

\theoremstyle{definition}

\newtheorem{definition}[theorem]{Definition}

\newtheorem{remark}[theorem]{Remark}

\DeclarePairedDelimiterX{\ip}[2]{\langle}{\rangle}{#1, #2}
\DeclarePairedDelimiterX{\norm}[1]{\lVert}{\rVert}{#1}

\newcommand{\bS}{\mathbb{S}}
\newcommand{\R}{\mathbb{R}}

\usepackage{cleveref}
  \crefname{theorem}{Theorem}{Theorems}
  \crefname{thm}{Theorem}{Theorems}
  \crefname{thm*}{Theorem*}{Theorems}
  \crefname{lemma}{Lemma}{Lemmas}
  \crefname{lem}{Lemma}{Lemmas}
  \crefname{remark}{Remark}{Remarks}
  \crefname{prop}{Proposition}{Propositions}
\crefname{notation}{Notation}{Notations}
\crefname{claim}{Claim}{Claims}
  \crefname{defn}{Definition}{Definitions}
  \crefname{corollary}{Corollary}{Corollaries}
  \crefname{section}{Section}{Sections}
  \crefname{figure}{Figure}{Figures}
    \crefname{assumption}{Assumption}{Assumptions}

\begin{document}

\title{An upper bound on the smallest singular value of dense signed random combinatorial matrices}

\author{\textsc{Yuxiao Shi, Zeyan Song, Kexin Yu, and Tingzhou Yu}}

\date{}

\maketitle


\begin{abstract}
Let $M_n$ be an $n\times n$ random matrix with entries in $\{-1, 0, 1\}$, where each row is independently and uniformly sampled from the set of all vectors in $\{-1, 0, 1\}^n$ containing exactly $d$ nonzero entries, with $d=pn$ for some fixed constant $p\in(0,1/2]$. A recent result by one of the authors gives a lower bound of order $n^{-1/2}$ for the smallest singular value $s_n(M_n)$ of this model with high probability. In this paper, we establish a matching upper bound for $s_n(M_n)$. Specifically, there exists a constant $C_p>0$ depending only on $p$ such that
\[
\forall \varepsilon >0 \qquad \mathbb{P} \left( s_n(M_n) \le \frac{\sqrt{d}}{\varepsilon^2 n} \right)
\ge  1-C_p\left(\varepsilon + \frac{1}{\sqrt{d}}\right).
\]
Consequently, in the dense regime, the smallest singular value $s_n(M_n)$ of signed random combinatorial matrices is typically of order $n^{-1/2}$. 
\end{abstract}

\bigskip

{\small
\noindent{\bf AMS 2010 Classification:}
Primary 60B20; Secondary 15B52, 60C05, 05C80.\\
\noindent
{\bf Keywords:} smallest singular value, invertibility of random matrices, random graphs, random matrices, regular graphs, condition number.}

\section{Introduction}\label{sec1}
Let $A$ be an $n \times n$ real matrix, and let $s_1(A)\ge \cdots \ge s_n(A)\ge 0$ be its singular values. The smallest singular value can be expressed as 
\[
s_n(A)=\inf_{x \in \bS^{n-1}} \|Ax\|_2,
\]
where $\bS^{n-1}$ denotes the unit sphere in $\R^n$. The quantity \(s_{n}(A)\) measures the distance from $A$ to the set of singular matrices in the operator norm. In particular,  $s_{n}(A)>0$ if and only if $A$ is invertible, and then $s_{n}(A)=1/\|A^{-1}\|$, where $\|A\|:=\sup_{x\in \mathbb S^{n-1}}\|Ax\|_2$ is the operator norm of the matrix $A:\ell_2^n \to \ell_2^n$. 

Quantitative estimates for $s_n(A)$ are central in random matrix theory and numerical linear algebra. In numerical analysis, the study of the smallest singular value is closely linked to the condition number  $\kappa(A)=s_1(A)/s_n(A)$, which measures the worst-case sensitivity of linear systems involving $A$ to perturbations \cite{spielman2004smoothed,tikhomirov2022quantitative, meyer2023matrix, matrixcomputation}. 
Beyond numerical stability, the smallest singular value also enters the study of singularity probability \cite{basak2021sharp,jaindiscrete,
litvak2022singularity,tikhomirov2020singularity}, empirical spectral distribution of random matrices \cite{TVuniversality, bordenave2012around, RTcirclawmin}, and asymptotic geometric analysis \cite{litvak2005smallest}.

For random matrices with independent entries, the behavior of the smallest singular value is now well understood in many classical settings. 
The study of the smallest singular value dates back to von Neumann and his collaborators, who predicted that the smallest singular value is typically of order $n^{-1/2}$ \cite{collectedworks,numericalinverting}. This scale was confirmed in the Gaussian case by Edelman  \cite{eigenvalues} and Szarek  \cite{conditionnumbers}, and later extended to independent and identically distributed (i.i.d.) subgaussian matrices by Rudelson and Vershynin \cite{thelittle,theleast}. Further developments cover weaker moment assumptions and related rectangular models see, for example, \cite{coveringof, anuppersquare, kahn1995probability,  litvak2005smallest, rudelson2008invertibility, rudelson2009smallest, mendelson2014singular, livshyts2021smallestTK,  tao2009inverse, tao2010random}. 


The situation is substantially more delicate when dependencies are present among the matrix entries, and progress has been comparatively slower. For instance, a sharp lower bound estimate for the smallest singular value of $n\times n$ symmetric random matrices, where the upper triangular entries are i.i.d.\ subgaussian random variables, was only recently proved by Campos, Jenssen, Michelen, and Sahasrabudhe \cite{symmetricsubgaussian}, while the analogous non-symmetric result was established much earlier by Rudelson and Vershynin \cite{thelittle}. Apart from symmetric matrices, a class of combinatorial matrix models has recently attracted considerable attention, primarily due to their close connections with graph theory (see, for example, \cite{cookregular,jainapprox,jainregular,litvakdigraphs,litvakshiftedregular,tran}).

In this paper, we study such a combinatorial model with random signs. Let $1\le d\le n$, and let $M_n$ be the random $n\times n$ matrix whose rows are independent and uniformly distributed over the set of vectors in $\{-1, 0, 1\}^n$ containing exactly $d$ nonzero entries. We refer to this as a signed random combinatorial matrix: the support of each row has fixed size $d$, and the nonzero entries have signs. The signed random combinatorial matrix is more tractable than the unsigned one because of the following support and sign representation. Let $Q_n$ be the $n\times n$ (unsigned) random combinatorial matrix whose rows are independent and uniformly distributed over $\{0,1\}^n$ with exactly $d$ ones, and let $\Xi$ be an independent sign matrix with i.i.d.\ Rademacher entries. Then \[M_n\stackrel{d}{=} Q_n\circ  \Xi,\]where $\circ$ denotes the Hadamard product of two matrices $A$ and $B$ such that $( A \circ  B)_{ij}=A_{ij}B_{ij}$ for $1\le i, j \le n$. We refer to $Q_n$ as the support matrix of $M_n$ as in \cite{cookregular}. Conditioning on $Q_n$, the nonzero entries of $M_n$ are independent signs. The support matrix $Q_n$ can be viewed as the adjacency matrix of a directed graph (digraph) on $n$ labeled vertices in which every vertex has an out-degree (row sum) of $d$, while the in-degrees (column sums) are random with a mean of $d$ (allowing self-loops).
If one allows the degree $d=n$, there are no zero entries, and the model reduces to the classical i.i.d.\ sign matrix. Hence, signed random combinatorial matrices connect the classical i.i.d.\ sign matrix with (unsigned) random combinatorial matrices. 

There are two closely related models that provide useful points of comparison. The first is Cook's signed regular digraph model \cite{cookregular}, in which both the row and column supports are constrained. More precisely, let $d\le n$ be positive integers and let $A^\pm=(A^\pm_{ij})_{1\le i, j\le n}$ be uniformly drawn from the set of matrices in $\{-1, 0, 1\}^{n\times n}$ with exactly $d$ nonzero entries in each row and in each column. Then $A^\pm\stackrel{d}{=} B\circ  \Xi$, where $B$ is the adjacency matrix of a uniform $d$ regular digraph and $\Xi$ is an independent sign matrix. Cook proved that this signed model is singular with probability $O(d^{-1/4})$ for $C(\log n)^2 \le d\le n$, and more generally established invertibility results for matrices of the form $\Sigma\circ  \Xi$ under expansion assumptions on the support matrix $\Sigma$ \cite[Theorem~1.3 and Theorem~1.4]{cookregular}. This work motivates the support and sign representation used above. Our model is the row regular analogue of this signed model without column sum constraint.

The second related model is the (unsigned) random combinatorial matrix $Q_n$ defined as above. Nguyen initiated the study of singularity probability for this model in \cite{nguyencombinatorial}.  For $d=n/2$, Tran  \cite{tran} obtained the sharp lower bound estimate for the smallest singular value, and Jain, Sah, and Sawhney \cite[Theorem 1.8]{jaindiscrete} confirmed Nguyen's conjecture on the sharp singularity probability in \cite[Conjecture 1.4]{nguyencombinatorial}. More recently, in the dense regime $d=pn$ with a fixed constant $p\in(0,1/2]$, D. Li, A. Litvak, and T. Yu \cite{lianupper} proved the complementary upper bound
\[
\mathbb{P}\left( s_n(Q_n) \le \frac{\sqrt{d}}{\varepsilon^2 n} \right)
\ge 1-C_p\left(\varepsilon + \frac{1}{\sqrt{d}}\right), \qquad \forall \varepsilon>0,
\]
where $C_p>0$ depends only on $p$. Together with the corresponding lower bound estimates \cite{tran}, these results show that the smallest singular value of $Q_n$ is typically of order $n^{-1/2}$ in the dense regime. However, the sparse regime, for $C\log n\le d \le n/2$ and $d=o(n)$, remains less understood. Related singularity results for sparse random combinatorial matrices were obtained in \cite{aignerperson,ferberkwansauermann}. D. Li, A. Litvak, and T. Yu recently obtained the first quantitative polynomial lower bound on the smallest singular value of shifted sparse random combinatorial matrices \cite[Theorem 1.7]{lianlitvakyucircular}.

In the sequel, we refer to $Q_n$ as the unsigned model and to $M_n$ as the signed model.
In the case of $d=n/2$, K. Yu \cite{yusigned} recently showed that there exist absolute constants $C, c>0$ such that for any $\varepsilon\ge 0$,
\begin{equation}\label{eq:1}
\mathbb{P}\left(s_n(M_n)\le \varepsilon n^{-1/2}\right)\le C\varepsilon +e^{-cn}.
\end{equation}
In particular, $M_n$ is singular with exponentially small probability. Moreover, the above results remain valid for $d=pn$ with a fixed constant $p\in(0,1/2]$  (see \cite[Remark 1.2]{yusigned}). 
This result suggests that \(n^{-1/2}\) is the correct scale for the smallest singular value in the dense regime, but until recently, a matching upper bound was not known.
In this paper, our main contribution is to establish a corresponding upper bound for $s_n(M_n)$. We obtain the following main result.
\begin{theorem}\label{theorem 1.1}
Let $p\in (0,1/2]$ be a fixed constant, and let $d, n$ be positive integers such that $d=pn$. Let $M_n$ be an $n\times n$ random matrix, whose rows are independent vectors uniformly distributed over the set of all vectors in $\{-1,0,1\}^{n}$ with exactly $d$ nonzero entries. Then there exists a constant  $C_p>0$ depending only on $p$ such that for every $\varepsilon> 0$,
\begin{equation}
\mathbb{P}\left( s_n(M_n) > \frac{\sqrt{d}}{\varepsilon^2 n} \right)
\le  C_p\left(\varepsilon + \frac{1}{\sqrt{d}}\right).
\nonumber
\end{equation}
\end{theorem}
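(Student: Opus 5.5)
We bound $s_n(M_n)$ through the distance from a column to the span of the others. The plan follows the Rudelson–Vershynin negative second moment identity, as adapted in \cite{lianupper} for the unsigned model $Q_n$. Let $C_1,\dots,C_n$ be the columns of $M_n$ and $H_k=\mathrm{span}\{C_j: j\neq k\}$. If $M_n$ is invertible, then
\[
\sum_{k=1}^n \mathrm{dist}(C_k,H_k)^{-2}=\|M_n^{-1}\|_{HS}^2\le n\,\|M_n^{-1}\|^2=n\, s_n(M_n)^{-2},
\]
so $s_n(M_n)\le \sqrt n\,\bigl(\sum_k \mathrm{dist}(C_k,H_k)^{-2}\bigr)^{-1/2}$. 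It therefore suffices to show that, with probability at least $1-C_p(\varepsilon+d^{-1/2})$, at least about $\varepsilon n$ indices $k$ satisfy $\mathrm{dist}(C_k,H_k)\le \sqrt d/(\varepsilon n)\cdot\varepsilon^{-1/2}$, up to constants. Equivalently, for a typical fixed $k$ we want $\mathbb{P}(\mathrm{dist}(C_k,H_k)\le t\sqrt d/ \sqrt n)\ge c\,t-C/\sqrt d$, and then we count indices via Markov's inequality applied to the number of bad indices. We then tune $t\approx\varepsilon$ so that $\sum_k \mathrm{dist}^{-2}\gtrsim \varepsilon n\cdot n/(d\varepsilon^2)$, which gives $s_n\lesssim \sqrt d/(\varepsilon^{1/2}n)$, a bound stronger than needed. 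The target $\sqrt d/(\varepsilon^2 n)$ leaves slack, so a crude version of the counting suffices. By symmetry of the column law, we take $k=n$.

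The distance is $|\langle C_n,Y\rangle|$, where $Y$ is a unit normal to $H_n$. Here $Y$ depends only on the first $n-1$ columns, but the columns are dependent because each row has exactly $d$ nonzeros. We handle this with the support/sign representation $M_n=Q_n\circ\Xi$. Conditioning on the support matrix $Q_n$ and on the signs in columns $1,\dots,n-1$, the vector $Y$ is determined, while the signs $\xi_{in}$ of column $n$ remain independent Rademacher variables. This is where the signed model is easier than the unsigned one. Hence
\[
\langle C_n,Y\rangle=\sum_{i\in S}\xi_{in}Y_i,\qquad S=\{i: (Q_n)_{in}=1\},
\]
a Rademacher sum with fixed coefficients. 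Since $Y$ is determined by the other columns, it is independent of these signs, though it does depend on $S$ through $Q_n$.

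We need a small ball \emph{lower} bound: $\mathbb{P}(|\sum_{i\in S}\xi_iY_i|\le \tau)\ge c\tau/\sigma - C\,\|Y_S\|_\infty/\sigma$ with $\sigma=\|Y_S\|_2$. This follows from the Berry–Esseen theorem, which compares the sum to $N(0,\sigma^2)$. The key inputs are therefore:
\begin{itemize}
\item[(a)] $\|Y_S\|_2\approx\sqrt{d/n}$, since $S$ has about $pn$ elements and $Y$ should be spread out;
\item[(b)] $\|Y\|_\infty\lesssim$ a quantity making the Berry–Esseen error $O(d^{-1/2})$, namely $\sum_{i\in S}|Y_i|^3/\sigma^3\lesssim d^{-1/2}$.
\end{itemize}
Item (b) holds if $Y$ is incompressible or delocalized at level $\|Y\|_\infty\lesssim n^{-1/2}$ on most coordinates. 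With (a) and $\tau=\varepsilon\sqrt{d/n}$, we get $\mathbb{P}(\mathrm{dist}\le \varepsilon\sqrt{d/n})\ge c\varepsilon - C/\sqrt d$, which feeds into the counting step above.

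The main obstacle is establishing (a) and the delocalization in (b) for the normal vector $Y$, jointly with the set $S$, which is correlated with $Y$ through the row constraints. For (a) I would first use the no-gaps / incompressibility machinery underlying the lower bound \eqref{eq:1} of \cite{yusigned}. That machinery applies to the $n\times(n-1)$ submatrix (or its transpose) and shows that with probability $1-e^{-cn}$ every unit normal is incompressible, i.e.\ spread with $|Y_i|\asymp n^{-1/2}$ on at least $c n$ coordinates. A cruder bound $\|Y\|_\infty\le K n^{-1/2}$ on all but a small fraction of coordinates seems harder. As a fallback I would use the version in \cite{lianupper}: replace the Berry–Esseen error by a Lyapunov-type term, and average over $k$ so that coordinates with large $|Y_i|$ are charged to few columns. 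For the correlation with $S$, I would condition on row supports and use that, given the other columns, each row's membership in $S$ is still random with probability at least about $p$. A Hoeffding or hypergeometric bound then gives $\|Y_S\|_2^2\ge c\,p$ times the spread mass, with high probability. The upper bound on $\|Y_S\|_2$ is trivial, since $\|Y_S\|_2\le 1$ and a Chebyshev bound gives order $\sqrt p$. Combining these with the union and Markov counting steps yields the theorem with $C_p$ depending on $p$ through these constants.
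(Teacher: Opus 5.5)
There is a genuine gap, and it is in your reduction rather than in the anti-concentration inputs.

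\textbf{Why the reduction fails.} The bound $s_n(M_n)\le\sqrt n\,\bigl(\sum_k\operatorname{dist}(C_k,H_k)^{-2}\bigr)^{-1/2}$ is never better than the trivial bound $s_n(M_n)\le\min_k\operatorname{dist}(C_k,H_k)$. To give $s_n\lesssim n^{-1/2}$ it needs $\sum_k\operatorname{dist}(C_k,H_k)^{-2}\gtrsim n^2$.

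\begin{itemize}
\item Your bulk count cannot deliver this. If $\varepsilon n$ indices have distance at most $\varepsilon\sqrt{d/n}$, then $\sum_k\operatorname{dist}^{-2}\gtrsim n^2/(\varepsilon d)$. This gives only $s_n\lesssim\sqrt{\varepsilon d/n}=\sqrt{\varepsilon p}$, a bound of order one. Your claimed $\sqrt d/(\varepsilon^{1/2}n)$ is off by a factor of order $\sqrt n$: the $\sqrt n$ in front of the sum has been dropped.
\item The two versions of the counting step that you call equivalent differ in scale by a factor of order $\sqrt n$. The first version (about $\varepsilon n$ indices with distance $\lesssim\sqrt d/(\varepsilon^{3/2}n)$) would suffice, but it is false. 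Given the spreading you assume in (a), a Rademacher sum of standard deviation $\approx\sqrt p$ is that small only with probability of order $n^{-1/2}$ for fixed $\varepsilon$. So only $O(\sqrt n)$ such indices are expected.
\item Heuristically, $\sum_k\operatorname{dist}^{-2}\gtrsim n^2$ requires some distance of order $1/n$. That is an extreme-value event for $n$ strongly dependent quantities. Proving it with probability $1-O(\varepsilon)$ would need a second-moment (pairwise decorrelation) estimate for the number of small distances, and your plan has none.
\item For the same reason, Markov's inequality on the number of bad indices cannot turn a per-index success probability of order $\varepsilon$ into an event of probability $1-O(\varepsilon)$.
\end{itemize}

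\textbf{The missing idea} is the Rudelson--Vershynin test vector used in the paper.

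\begin{itemize}
\item Work with the rows $X_i$ of $M_n$. These are independent, so no support/sign conditioning is needed.
\item Set $x=X_1-P_1X_1$, with $P_1$ the projection onto $H_1=\mathrm{span}\{X_2,\dots,X_n\}$. Then $\|x\|_2=|\langle X_1,f_1\rangle|$ for a unit normal $f_1$ independent of $X_1$. By Chebyshev (Lemma \ref{lemma 4.2}), $\|x\|_2\lesssim\sqrt{p/\varepsilon}$ with probability $1-O(\varepsilon)$.
\item The biorthogonal-system identities give $\|(M_n^T)^{-1}x\|_2^2\ge\sum_{i=2}^n a_i^2/b_i^2$. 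Here $a_i=|\langle X_1,Z\rangle|$ for a unit $Z\in H_1\cap H_{1,i}^\perp$ independent of $X_1$, and $b_i=\operatorname{dist}(X_i,H_{1,i})$.
\item Each ratio $a_i/b_i$ is $\gtrsim\varepsilon^{3/2}$ with probability $1-O(\varepsilon+d^{-1/2})$. The averaging inequality of Proposition \ref{proposition 4.5} upgrades this to $\|(M_n^T)^{-1}x\|_2\gtrsim\varepsilon^{3/2}\sqrt n$.
\item Hence $s_n(M_n)\le\|x\|_2/\|(M_n^T)^{-1}x\|_2\lesssim\sqrt d/(\varepsilon^2 n)$.
\end{itemize}

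First-moment information suffices here because the $n$ order-one terms add up inside a single squared norm. They never pass through the lossy step $\|M_n^{-1}\|_{\mathrm{HS}}\le\sqrt n\,\|M_n^{-1}\|$.

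The estimates also run in the opposite direction from yours. The paper needs only a small-ball \emph{upper} bound for $a_i$ and a second-moment upper bound for $b_i$. The small-ball bound is the CLCD estimate of Lemma \ref{lemma 3.4}, applied after Proposition \ref{proposition 2.4} excludes almost-constant normals. Berry--Esseen small-ball \emph{lower} bounds are not needed, and neither is the delocalization of normal vectors in your item (b), which you leave unresolved.
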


\begin{remark}
Theorem \ref{theorem 1.1}, together with K. Yu's lower bound result \eqref{eq:1}, shows that in the dense regime $d=pn$ the smallest singular value $s_n(M_n)$ is typically of order $n^{-1/2}$ with probability tending to $1$, matching the behavior of $s_n(Q_n)$ of (unsigned) random combinatorial matrix. Moreover, the order $\sqrt{d}/n$ in Theorem \ref{theorem 1.1} is the one suggested by comparison with the classical i.i.d.\ model. Indeed, if $R$ is one row of $M_n$, then   
\[
    \mathbb E R= 0, \qquad \operatorname{Cov}(R)=\frac dn \mathbf{I}_n.
    \]
Thus, after rescaling by the standard deviation $\sqrt{d/n}$, the rows become isotropic. Comparison with a classical i.i.d.\ centered unit variance model suggests that the least singular value should have scale $n^{-1/2}$. Undoing this normalization suggests a scale of $\sqrt{d/n}\cdot n^{-1/2}=\sqrt{d}/n$ for $M_n$ (analogous scale predictions appear for $d$ regular digraph model \cite{litvakshiftedregular} and for the (unsigned) random combinatorial model \cite[Conjecture 1.5]{lianupper}).  
\end{remark}

\begin{remark}[Comparison with the unsigned model]\label{remark: comparison}
Compared with $Q_n$, the signs in $M_n$ center the row distribution and remove the deterministic constant direction.
Let $\xi$ be one row of the unsigned model $Q_n$ (uniformly distributed over $\{0,1\}^n$ with exactly $d$ ones), and write $p=d/n$. Then
    \[
    \mathbb E \xi =p \mathbf 1, \qquad \operatorname{Cov}(\xi)=\frac{p(1-p)n}{n-1}P_{\mathbf 1^\perp},
    \]
where $P_{\mathbf 1^\perp}=\mathbf I_n-\frac1n \mathbf 1\mathbf 1^T$ denotes the orthogonal projection onto $\mathbf 1^\perp:=\{v\in \mathbb{R}^n: \langle v, \mathbf 1\rangle=0\}$, and $\mathbf 1=(1,\dots, 1)$. The identity $\langle \xi, \mathbf 1\rangle =d$ creates a deterministic component in the $\mathbf 1$ direction. This is why arguments for $Q_n$ often separate the constant direction. This distinction also affects the net argument (see, for example, \cite{tran, lianupper, lianlitvakyucircular}). 
In contrast, the signed model is already centered and  does not require separating out the $\mathbf 1$ direction when estimating the operator norm and linear forms estimates. 

\end{remark}

\begin{remark}[Condition numbers]
    For the signed model, we have $\|M_n\|_{\mathrm{HS}}^2=nd$ deterministically. Since
    \[\|M_n\|_{\mathrm{HS}}^2=\sum_{i=1}^n s_i^2(M_n)\le n s_1^2(M_n),
    \]
  we have
    \[
    s_1(M_n)\ge \frac{\|M_n\|_{\mathrm{HS}}}{\sqrt{n}}=\sqrt{d}.
    \]
Combining with Lemma \ref{lemma 2.3}, $s_1(M_n)$ has order $\sqrt{n}$ for $d=pn$ with a fixed constant $p\in (0,1/2]$. Hence, the condition number of the signed model has typical order 
\[\kappa(M_n)=\frac{s_1(M_n)}{s_n(M_n)} \approx n
\] 
in the dense regime (i.e., for every $\eta>0$, there are constants $C, c>0$ depending only on $p$ and $\eta$, such that $\mathbb P(cn\le \kappa (M_n)\le Cn)\ge 1-\eta$ for all sufficiently large $n$). This has important implications for numerical computation. Indeed, it is well known that condition number is closely linked to computational complexity, particularly in the context of numerical linear algebra and iterative methods. In general, a large condition number typically leads to slower convergence \cite{allaire2008numerical}. This contrasts with the unsigned model.  Since $Q_n \mathbf 1=d \mathbf 1$, one has $s_1(Q_n)\ge d$, and centered operator norm estimates imply $s_1(Q_n)=O(d)$ \cite[Theorem 4.1]{lianlitvakyucircular}. Together with $s_n(Q_n)\approx n^{-1/2}$, this yields condition number $\kappa(Q_n)$ of order $n^{3/2}$ in the dense regime (see \cite[Remark 1.3]{lianupper}).
\end{remark}

\subsection{Organization}
The remainder of the paper is organized as follows. In Section \ref{sec2}, we present some preliminaries. In Section \ref{sec3}, we establish the invertibility estimates for fixed vectors and over the set of almost-constant vectors. In Section \ref{sec4}, we prove Theorem \ref{theorem 1.1}.   

\section{Preliminaries}\label{sec2}

\subsection{Notation}
We adopt the following standard notations. We write $[n]:=\{1,2,\dots, n\}$ and denote by $\{e_i\}_{
i=1}^n$ the standard basis of $\R^n$. The standard inner product on $\R^n$ is denoted by $\ip{\cdot}{\cdot}$, and the Euclidean norm by $\norm{\cdot}_2$. The vector $\mathbf 1\in \mathbb R^n$ denotes the vector with each coordinate equal to $1$, and $\mathbf{I}_n$ denotes the $n\times n$ identity matrix. We denote the unit sphere in $\R^n$ by $\bS^{n-1}:=\{x\in \mathbb R^n: \|x\|_2=1\}$ and the Euclidean unit ball by $B_2^n$. The distance from a vector $v$ to a subspace $H\subset \mathbb R^n$ is defined as \[\operatorname{dist}(v,H):=\inf_{u\in H}\norm{v-u}_2.\] For an $n\times n$ matrix $A=(a_{ij})$, its operator norm is denoted by $\norm{A}:=\sup_{x\in \mathbb S^{n-1}}\|Ax\|_2$ and its Hilbert-Schmidt norm by $\norm{A}_{\mathrm{HS}}:=\sqrt{\sum_{1\le i, j\le n}a_{ij}^2}$. The transpose and inverse of $A$ are denoted by $A^T$ and $A^{-1}$, respectively. For $n\times n$ matrices $A$ and $B$, $A\circ B$ denotes the Hadamard (entrywise) product (i.e., $(A\circ B)_{ij}=A_{ij}B_{ij}$ for $1\le i, j\le n)$. For two random variables $X$ and $Y$, we write $X\stackrel{d}{=} Y$ to denote equality in distribution. We write $\mathbf 1^\perp:=\{v\in \mathbb{R}^n: \langle v, \mathbf 1\rangle=0\}$ and the orthogonal projection onto $\mathbf 1^\perp$ is denoted by $P_{\mathbf 1^\perp}=\mathbf I_n-\frac1n \mathbf 1\mathbf 1^T$. For a random vector $X$, its covariance matrix is denoted by $\operatorname{Cov}(X)$. Finally, $C_0,c_0,C_1,c_1,\dots$ denote positive constants that may depend on the parameter $p=d/n$.

\subsection{Almost-constant vectors}

An often used approach to estimating the smallest singular value, introduced in \cite{litvak2005smallest} (see also \cite{litvak2004random}), is to partition the unit sphere and treat vectors of different complexity separately. In this context, one usually separates vectors of small complexity, close to sparse vectors, from spread vectors. This splitting was later formalized into the compressible/incompressible method by Rudelson and Vershynin \cite{thelittle}. For non-Hermitian models with combinatorial dependence, such as random regular digraphs and random combinatorial matrices, it is more natural to use a related decomposition into almost-constant vectors and its complement. This dichotomy goes back to Cook's work on random regular digraphs \cite{cookregular} and was further used and refined in \cite{litvakdigraphs,litvakshiftedregular,tran,jaindiscrete,lianupper,lianlitvakyucircular,yusigned}. We shall use this latter decomposition below. The almost-constant part has low metric complexity and will be handled by a net argument, while the complement is the regime where CLCD-based anti-concentration estimates apply.

\begin{definition}\label{definition 2.1}
Let $\delta, \rho \in (0,1)$. We define $\operatorname{Cons}(\delta, \rho)$ as the set of vectors $v \in \bS^{n-1}$ such that $|v_i - \lambda| \le \rho/\sqrt{n}$ holds for some $\lambda \in \R$ and for at least $(1-\delta)n$ coordinates $i \in [n]$. A vector $v \in \bS^{n-1}$ is called non-almost-constant if  $v \in \bS^{n-1}\setminus \operatorname{Cons}(\delta, \rho)$.
\end{definition}

\subsection{Biorthogonal systems}

We next recall the notion of a biorthogonal system, which plays the role of a dual basis and will be used to relate inverse matrices to distances from vectors to the spans of the remaining vectors.
A system $\{E_i,F_i\}_{i=1}^n$ in a Hilbert space $\mathcal{H}$ is called a \textit{biorthogonal system} if $\ip{E_i}{F_j}=\delta_{ij}$, for all $i,j\in[n]$. Note that in a biorthogonal system, both $\{E_i\}_{i=1}^n$ and $\{F_i\}_{i=1}^n$ are linearly independent. Consequently, if $\mathcal{H}$ has dimension $n$, then each set spans $\mathcal{H}$, and the system is said to be \textit{complete}. The following proposition collects several properties of biorthogonal systems that are useful for the proof of Theorem \ref{theorem 1.1} (see \cite[Proposition 2.1]{theleast}).
\begin{proposition}\label{proposition 4.1}
\begin{enumerate}
\item Let $\{E_i\}_{i=1}^n$ be linearly independent vectors in an $n$-dimensional Hilbert space $\mathcal{H}$. There exist unique vectors $\{F_i\}_{i=1}^n$ such that $\{E_i, F_i\}_{i=1}^n$ forms a complete biorthogonal system in $\mathcal{H}$.
\item If $\{E_i, F_i\}_{i=1}^n$ is a complete biorthogonal system in $\mathcal{H}$, then the Euclidean norm of each $F_i$ satisfies
\[
\norm{F_i}_2=\frac{1}{\operatorname{dist}(E_i,H_i)},\quad  i\in[n],
\]
where $H_i=\mathrm{span}\{E_j\}_{j\neq i}$.
\item If A is an $n\times n$ invertible matrix, then $\left\{Ae_i, (A^{-1})^Te_i\right\}_{i=1}^n$ is a complete biorthogonal system in $\R^n$.
\end{enumerate}
\end{proposition}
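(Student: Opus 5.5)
We prove the three parts of Proposition \ref{proposition 4.1} directly, using only linear algebra. Throughout we identify $\mathcal H$ with $\R^n$ by choosing an orthonormal basis; this preserves inner products and distances. The plan is to prove part (iii) first, since part (i) follows from it.

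\textbf{Part (iii).} Let $A$ be invertible. For $i,j\in[n]$,
\[
\ip{Ae_i}{(A^{-1})^Te_j}=\ip{A^{-1}Ae_i}{e_j}=\ip{e_i}{e_j}=\delta_{ij}.
\]
So the system is biorthogonal. It is complete because $\dim\R^n=n$ and, as noted before the proposition, each family in a biorthogonal system is linearly independent.

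\textbf{Part (i).} Given linearly independent $E_1,\dots,E_n$, let $A$ be the matrix with columns $E_i$, so $Ae_i=E_i$. Then $A$ is invertible, and by part (iii) the choice $F_i:=(A^{-1})^Te_i$ gives a complete biorthogonal system. For uniqueness, fix $i$. The conditions $\ip{E_j}{F_i}=\delta_{ij}$ for $j\in[n]$ say exactly that $A^TF_i=e_i$. Since $A^T$ is invertible, this forces $F_i=(A^T)^{-1}e_i$.

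\textbf{Part (ii).} This is the only step needing a small geometric argument. Fix $i$ and set $H_i=\mathrm{span}\{E_j\}_{j\ne i}$.
\begin{enumerate}
\item Because $\{E_j\}$ is linearly independent, $H_i$ is a hyperplane.
\item Biorthogonality gives $F_i\perp E_j$ for all $j\ne i$, so $F_i\in H_i^\perp$, which is one-dimensional.
\item Let $u$ be a unit vector spanning $H_i^\perp$, and write $F_i=tu$ for some $t\in\R$.
\item The orthogonal projection of $E_i$ onto $H_i^\perp$ is $\ip{E_i}{u}u$. Hence $\operatorname{dist}(E_i,H_i)=|\ip{E_i}{u}|$.
\item The condition $1=\ip{E_i}{F_i}=t\ip{E_i}{u}$ shows that $\ip{E_i}{u}\neq 0$.
\end{enumerate}
Therefore
\[
\norm{F_i}_2=|t|=\frac{1}{|\ip{E_i}{u}|}=\frac{1}{\operatorname{dist}(E_i,H_i)}.
\]

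There is no substantial obstacle here. The only point needing care is the identification $\operatorname{dist}(E_i,H_i)=|\ip{E_i}{u}|$ in part (ii), which holds because $H_i$ has codimension exactly one.
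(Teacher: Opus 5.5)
Your proof is correct and complete. Part (iii) is the one-line adjoint computation, and uniqueness in (i) is exactly the invertibility of $A^T$ in $A^TF_i=e_i$. Part (ii) correctly uses that $H_i^\perp$ is one-dimensional, which is where completeness ($\dim\mathcal H=n$) enters. The paper gives no proof of this proposition and simply quotes it from Rudelson and Vershynin \cite[Proposition 2.1]{theleast}, so your argument is a self-contained version of the standard linear algebra behind that citation rather than a different route.
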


\subsection{Anti-concentration}


Anti-concentration estimates bound the probability that a random variable falls in a short interval. They are opposite to concentration inequalities and are a key tool in smallest singular value estimates, where one needs anti-concentration estimates for linear forms of the rows of a random matrix.
To measure anti-concentration, we use the Lévy concentration function.
\begin{definition}\label{definition 3.1}
Let $\xi$ be a random variable. The \textit{Lévy concentration function} of $\xi$ is defined by
\[
\mathcal{L}(\xi,\varepsilon):=\sup_{x \in \R}\mathbb{P}\{|\xi-x| \le \varepsilon\}, \quad \varepsilon \ge 0.
\]
\end{definition}
A standard tool for bounding the Lévy concentration function of sums of i.i.d.\ random variables is the Least Common Denominator (LCD), introduced by Rudelson and Vershynin in \cite{thelittle}. However, in our setting, the entries within each row of $M_n$ are not independent, since each row contains exactly $d$ nonzero entries, each taking value either $-1$ or $1$. To handle this combinatorial dependence, Tran \cite{tran} introduced the Combinatorial Least Common Denominator (CLCD), which is specifically designed for $0/1$ (unsigned) random combinatorial matrix. K. Yu \cite{yusigned} then developed the corresponding signed anti-concentration estimates for the $\{-1, 0, 1\}$ signed model. We use these CLCD anti-concentration estimates below.
\begin{definition}\label{definition 3.2}
For a vector $v\in \R^n$, and parameters $\alpha,\gamma>0$, define 
\[
\operatorname{CLCD}_{\alpha,\gamma}(v):=\inf\left\{\theta>0:\operatorname{dist}\left(\theta \mathrm{D}(v),\mathbb{Z}^{\binom{n}{2}}\right)<\min\left(\gamma\norm{\theta\mathrm{D}(v)}_2,\alpha\right)\right\}.
\]
Here by $\mathrm{D}(v)$ we denote the vector in $\R^{\binom{n}{2}}$ whose $(i,j)$-coordinate is $v_i-v_j$, for $1\le i<j\le n$.
\end{definition}
The following lemma provides a lower bound on the CLCD for non-almost-constant vectors (see \cite[Lemma 2.15]{tran}).
\begin{lemma}\label{lemma 3.3}
Let $\delta, \rho\in(0,1)$, and fix $v\in \bS^{n-1}\setminus \operatorname{Cons}(\delta, \rho)$. Then, for every $\alpha>0$ and every $\gamma \in (0,\delta\rho/12)$, we have
\[
\operatorname{CLCD}_{\alpha,\gamma}(v)\ge\frac{\sqrt{\delta n}}{7}.
\]
\end{lemma}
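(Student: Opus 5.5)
We need to prove Lemma \ref{lemma 3.3}: if $v\in\bS^{n-1}\setminus\operatorname{Cons}(\delta,\rho)$ and $\gamma<\delta\rho/12$, then $\operatorname{CLCD}_{\alpha,\gamma}(v)\ge \sqrt{\delta n}/7$. The strategy is by contradiction. Suppose some $\theta<\sqrt{\delta n}/7$ satisfies $\operatorname{dist}(\theta\mathrm D(v),\mathbb Z^{\binom n2})<\gamma\|\theta\mathrm D(v)\|_2$; we may ignore the $\alpha$ constraint since we only use the $\gamma$ part of the minimum. Write $\theta(v_i-v_j)=k_{ij}+r_{ij}$ with $k_{ij}\in\mathbb Z$ the nearest integer, so that $\sum_{i<j}r_{ij}^2<\gamma^2\theta^2\|\mathrm D(v)\|_2^2$. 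We also have $\|\mathrm D(v)\|_2^2=n\|v\|_2^2-(\sum v_i)^2\le n$. Hence $\sum_{i<j} r_{ij}^2<\gamma^2\theta^2 n$.

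\emph{Step 1: a good reference coordinate.} Averaging over $i$, there is an index $i_0$ with $\sum_{j}r_{i_0j}^2\le 2\gamma^2\theta^2$. By Markov's inequality, for all but at most $\delta n/2$ indices $j$ we have
\[
|r_{i_0j}|\le \frac{2\gamma\theta}{\sqrt{\delta n}}.
\]
Call this set of indices $J$; it satisfies $|J|\ge(1-\delta/2)n$.

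\emph{Step 2: the integer parts are mostly constant.} For $j\in J$, $\theta v_j=\theta v_{i_0}-k_{i_0j}-r_{i_0j}$. So $v_j$ lies within $2\gamma/\sqrt{\delta n}$ of the lattice $v_{i_0}-\theta^{-1}\mathbb Z$, whose spacing is $1/\theta>7/\sqrt{\delta n}$. Since $\|v\|_2=1$, the number of $j$ with $|v_j|\ge t$ is at most $1/t^2$. Hence all but at most $\delta n/2$ coordinates satisfy $|v_j|<\sqrt{2/(\delta n)}$, so they lie in an interval of length $2\sqrt2/\sqrt{\delta n}<7/\sqrt{\delta n}$.

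Such an interval, enlarged by the small error $2\gamma/\sqrt{\delta n}$, meets essentially a single lattice point, or at most two adjacent ones. This is the main obstacle: the spacing constant $7$ must beat $2\sqrt 2+4\gamma$, and the case where two lattice points both fit must be handled. One option is a pigeonhole argument choosing the majority lattice class. Another is to absorb it by noting that the two adjacent points are $1/\theta$ apart, which conflicts with the set's diameter bound once the constants are tuned. If necessary, I would redo the counting by choosing $i_0$ among coordinates with small $|v_{i_0}|$, which costs only a constant factor.

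\emph{Step 3: conclude almost-constancy.} At least $(1-\delta)n$ indices $j$ then share the same integer $k$. For these, $|v_j-\lambda|\le 2\gamma/\sqrt{\delta n}$ with $\lambda=v_{i_0}-k/\theta$. Since $\gamma<\delta\rho/12$, we get $2\gamma/\sqrt{\delta n}<\rho/\sqrt n$ (using $\sqrt\delta\le1$). This says $v\in\operatorname{Cons}(\delta,\rho)$, a contradiction.
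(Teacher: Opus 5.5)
Your proof is correct, and the point you flag as the main obstacle is not actually an obstacle. Let $S=\{j:|v_j|<\sqrt{2/(\delta n)}\}$, so that $|J\cap S|\ge(1-\delta)n$. For $j\in J\cap S$, each lattice point $\ell_j:=v_{i_0}-k_{i_0j}/\theta$ lies within $2\gamma/\sqrt{\delta n}$ of $v_j$, hence in an open interval of length $(2\sqrt2+4\gamma)/\sqrt{\delta n}$. Since $\gamma<\delta\rho/12<1/12$, this length is less than $7/\sqrt{\delta n}$, which is less than $1/\theta$. Because $\ell_j-\ell_{j'}\in\theta^{-1}\mathbb Z$, all the $k_{i_0j}$ with $j\in J\cap S$ must coincide. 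So the two-lattice-point case cannot occur, and you need neither constant tuning nor a different choice of $i_0$.

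You should discard the \emph{majority lattice class} option. It would retain only about half of $J\cap S$, and so could not produce the $(1-\delta)n$ coordinates needed to contradict $v\notin\operatorname{Cons}(\delta,\rho)$. For the averaging in Step 1, set $k_{ji}=-k_{ij}$, $r_{ji}=-r_{ij}$ and $r_{ii}=0$, so that $\sum_{i,j}r_{ij}^2=2\sum_{i<j}r_{ij}^2$. With these points in place, Steps 1--3 go through as written.

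The paper gives no proof of this lemma; it quotes it from Tran, so there is no in-paper argument to match. The more common route to bounds of this kind, as for the classical LCD of incompressible vectors, is direct rather than by contradiction. On the same set $S$, for $\theta<\sqrt{\delta n}/7$ one has $|\theta(v_i-v_j)|<2\sqrt2/7<1/2$ for $i,j\in S$. So the nearest integers are $0$, and $\operatorname{dist}(\theta\mathrm D(v),\mathbb Z^{\binom n2})^2\ge\theta^2|S|\sum_{i\in S}(v_i-\bar v_S)^2$, where $\bar v_S$ is the mean of $v$ over $S$. Non-almost-constancy with $\lambda=\bar v_S$ gives more than $\delta n/2$ indices in $S$ with $|v_i-\bar v_S|>\rho/\sqrt n$. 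This yields a lower bound $\theta^2\delta\rho^2n/4$, which beats $\gamma^2\theta^2\|\mathrm D(v)\|_2^2$.

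That route needs a quantitative spread estimate for $v$ restricted to $S$. Yours is an inverse argument instead. A small rounding error forces a good row $i_0$, and then the lattice spacing $1/\theta$ together with the bounded coordinates forces a single common integer part, which exhibits the almost-constant structure directly. It uses only $\|\mathrm D(v)\|_2\le\sqrt n$ and no variance computation. Both arguments really need only $\gamma\le\sqrt\delta\rho/2$ (plus $2\sqrt2+4\gamma<7$ in your case), so the hypothesis $\gamma<\delta\rho/12$ is more than enough.
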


For $p=1/2$, K. Yu \cite[Theorem 1.3]{yusigned} obtained anti-concentration estimates for the random sum $\sum_{i=1}^n v_i \eta_i$. The same proof applies to every fixed $p\in (0,1/2]$. We omit the proof.
\begin{lemma}\label{lemma 3.4}
Fix any $b>0$ and $\gamma \in(0,1)$. Let $d=pn$ be a positive integer with $p\in(0,1/2]$. Let $v\in \R^n $ such that $\norm{\mathrm{D}(v)}_2\ge b\sqrt{n}$. Suppose $\eta$ is a random vector uniformly distributed over the set of vectors in $\{-1,0,1\}^{n}$ with exactly $d$ nonzero entries. Then, for every $\alpha>0$ and $\varepsilon\ge0$, we have
\[
\mathcal{L}\left(\sum_{i=1}^nv_i\eta_i,\varepsilon\sqrt{p}\right)\le C\varepsilon +\frac{C}{\operatorname{CLCD}_{\alpha,\gamma}(v)}+Ce^{-4\alpha^2d/{n}^2}.
\]
The constant $C>0$ depends only on $p, b$, and $\gamma$.
\end{lemma}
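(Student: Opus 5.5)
The proof conditions on a random pairing of coordinates, which turns the constrained sum into a sum of independent two-point variables. The differences $v_i-v_j$ then appear in the characteristic functions, and an Esseen-type argument controlled by $\operatorname{CLCD}_{\alpha,\gamma}(v)$ finishes. I would follow Tran's argument for the unsigned model \cite{tran}, as adapted by K. Yu \cite{yusigned}.

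\textbf{Step 1: pairing representation.} Write $\eta_i=\varepsilon_i\mathbf 1_{\{i\in S\}}$, where $S$ is a uniform $d$-subset of $[n]$ and $(\varepsilon_i)$ are independent Rademacher signs independent of $S$. Since $d\le n/2$, we have $n-d\ge d$, so $S$ can be generated as follows.
\begin{enumerate}
\item Take a uniform random permutation $\pi$ of $[n]$ and form the $d$ disjoint pairs $(\pi(2k-1),\pi(2k))$, $k\le d$.
\item Independently flip fair bits $\delta_k$, and put $\pi(2k-1)$ into $S$ if $\delta_k=1$ and $\pi(2k)$ otherwise.
\end{enumerate}
Exchangeability shows the resulting $S$ is uniform. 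Conditionally on $\pi$, we get
\[
\sum_i v_i\eta_i=\sum_{k=1}^d Z_k,
\]
where the $Z_k$ are independent and $Z_k$ is uniform on $\{\pm v_{\pi(2k-1)},\pm v_{\pi(2k)}\}$. Its characteristic function is
\[
\tfrac12\bigl(\cos(tv_i)+\cos(tv_j)\bigr)=\cos\!\bigl(\tfrac{t(v_i+v_j)}2\bigr)\cos\!\bigl(\tfrac{t(v_i-v_j)}2\bigr),
\]
with $(i,j)=(\pi(2k-1),\pi(2k))$. Its absolute value is at most
\[
\bigl|\cos(t(v_i-v_j)/2)\bigr|\le \exp\bigl(-c\operatorname{dist}(t(v_i-v_j)/2\pi,\mathbb Z)^2\bigr).
\]
This is exactly where the differences $\mathrm D(v)$, and hence the CLCD, enter.

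\textbf{Step 2: Esseen inequality.} With $\theta=t/2\pi$, Esseen's inequality gives
\[
\mathcal L\Bigl(\sum v_i\eta_i,\varepsilon\sqrt p\Bigr)\le C\int_{-1/(\varepsilon\sqrt p)}^{1/(\varepsilon\sqrt p)}\mathbb E_\pi\exp\Bigl(-c\sum_{k\le d}\operatorname{dist}\bigl(\theta(v_{\pi(2k-1)}-v_{\pi(2k)}),\mathbb Z\bigr)^2\Bigr)\,dt.
\]
The exponent is a sum over $d$ random pairs sampled without replacement. Its mean is $\tfrac{2d}{n(n-1)}\operatorname{dist}(\theta\mathrm D(v),\mathbb Z^{\binom n2})^2$, up to the harmless fact that the coordinatewise distance sum equals the squared distance to the lattice.

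\textbf{Step 3 (main obstacle): lower tail over the pairing.} I need to show the random sum is at least a constant fraction of its mean, except on a set of $\pi$ whose contribution is negligible. The summands lie in $[0,1/4]$ and are drawn from a fixed multiset by a random matching. I would use a concentration inequality for sampling without replacement, as in Tran's lemma for random matchings (Hoeffding's comparison with i.i.d.\ sampling, or a martingale / bounded-differences argument in $\pi$). This yields a lower bound $\gtrsim \tfrac{d}{n^2}\operatorname{dist}(\theta\mathrm D(v),\mathbb Z^{\binom n2})^2$ with failure probability $e^{-c(\text{mean})}$. That failure probability can be absorbed into the same exponential form, so the expectation over $\pi$ is at most $\exp\bigl(-c\tfrac{d}{n^2}\operatorname{dist}(\theta\mathrm D(v),\mathbb Z^{\binom n2})^2\bigr)$, up to constants.

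\textbf{Step 4: split the $\theta$-range at $\operatorname{CLCD}_{\alpha,\gamma}(v)$.}
\begin{enumerate}
\item For $|\theta|<\operatorname{CLCD}_{\alpha,\gamma}(v)$, the definition gives $\operatorname{dist}\ge\min(\gamma\|\theta\mathrm D(v)\|_2,\alpha)$. Using $\|\mathrm D(v)\|_2\ge b\sqrt n$ and $d=pn$,
\[
\tfrac d{n^2}\gamma^2\|\theta\mathrm D(v)\|_2^2\ge p\gamma^2b^2\theta^2 .
\]
The Gaussian piece then integrates to $O(1)$; restricted to $|t|\le 1/(\varepsilon\sqrt p)$ this contributes $C\varepsilon$. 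The $\alpha$-piece contributes at most $C e^{-c\alpha^2d/n^2}$ times the integration length. To reach the stated constant $4$ in the exponent, I would truncate the integral at a level comparable to the CLCD and tune constants.
\item For $|t|$ beyond $2\pi\operatorname{CLCD}_{\alpha,\gamma}(v)$, the standard Rudelson--Vershynin truncation of the Esseen integral contributes $C/\operatorname{CLCD}_{\alpha,\gamma}(v)$. Equivalently, one bounds $\mathcal L$ at scale $\max(\varepsilon\sqrt p,\,1/\operatorname{CLCD})$.
\end{enumerate}
Summing the pieces gives the bound $C\varepsilon+C/\operatorname{CLCD}_{\alpha,\gamma}(v)+Ce^{-4\alpha^2d/n^2}$, with $C$ depending only on $p$, $b$ and $\gamma$.
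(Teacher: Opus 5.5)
Your route (random pairing, Esseen's inequality, and splitting the frequency range at $\operatorname{CLCD}_{\alpha,\gamma}(v)$) is the Tran/K.~Yu argument that the paper invokes without proof. Steps~1 and~2 are correct except for a slip: Esseen's inequality carries a prefactor $\varepsilon\sqrt p$ in front of the integral, and without it the Gaussian piece gives $O(1)$, not $C\varepsilon$.

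The genuine gap is Step~3, which carries the whole argument. You need a \emph{multiplicative} lower tail for the matching sum
$W_\theta(\pi)=\sum_{k\le d}\operatorname{dist}\bigl(\theta(v_{\pi(2k-1)}-v_{\pi(2k)}),\mathbb{Z}\bigr)^2$.
Namely, $\mathbb{P}_\pi(W_\theta\le\mu_\theta/2)\le e^{-c\mu_\theta}$ with $\mu_\theta=\mathbb{E}_\pi W_\theta$ and $c$ absolute, valid for every size of $\mu_\theta$. Neither tool you name gives this.
\begin{enumerate}
\item Hoeffding's convex-order comparison concerns linear statistics of a sample drawn from a fixed population. The $d$ vertex-disjoint pairs of a random matching are not such a sample from the $\binom{n}{2}$ pairs, and the comparison with i.i.d.\ sampling actually fails. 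Take $n=4$, $d=2$, with weight $1/4$ on $\{1,2\}$ and $\{3,4\}$ and $0$ elsewhere. Then $\mathbb{E}e^{-\lambda W}\to 2/3$ as $\lambda\to\infty$, while the i.i.d.\ analogue tends to $4/9$.
\item Bounded differences in $\pi$ (a transposition moves $W_\theta$ by at most $1/2$) only gives $\mathbb{P}_\pi(W_\theta\le\mu_\theta/2)\le e^{-c\mu_\theta^2/n}$. In the Gaussian regime $\mu_\theta\gtrsim p\gamma^2b^2\theta^2$, this is vacuous for $|\theta|\lesssim n^{1/4}$. The Esseen integral then only yields $C\varepsilon n^{1/4}$ in place of $C\varepsilon$. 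That is not the lemma, and it would spoil the bound on $a_2$ in Section~\ref{sec4}.
\end{enumerate}

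What is missing is a variance-sensitive (Bernstein- or Chernoff-type) concentration inequality for sums over a random matching. One possible route is Chatterjee's exchangeable-pairs method, using the switching of two pairs. Another is to condition on the sets $\{\pi(1),\pi(3),\dots\}$ and $\{\pi(2),\pi(4),\dots\}$, so that $W_\theta$ becomes a Hoeffding permutation statistic, and then control the lower tail of its conditional mean separately. With such a bound, $\mathbb{E}_\pi e^{-cW_\theta}\le 2e^{-c'\mu_\theta}$, and your Step~4 goes through.

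Separately, ``tune constants'' does not produce the exponent $4\alpha^2d/n^2$. Truncating the integral does not affect that constant. Since $\operatorname{CLCD}_{\alpha,\gamma}$ is nonincreasing in $\alpha$, you also cannot rescale $\alpha$ to upgrade $e^{-c\alpha^2d/n^2}$ to $e^{-4\alpha^2d/n^2}$. Within your approach, reaching $4$ requires two things: the sharp bound $|\cos(\pi y)|\le e^{-\pi^2\operatorname{dist}(y,\mathbb{Z})^2/2}$, and a matching estimate that retains at least a fraction $4/\pi^2$ of the exponent. Once Step~3 is fixed, your argument gives the weaker form with some absolute $c>0$ in place of $4$. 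That weaker form is all that Remark~\ref{remark 3.7} and Section~\ref{sec4} actually use.
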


\begin{remark}\label{remark 3.7}
For any $v\in\bS^{n-1}\setminus \operatorname{Cons}(\delta, \rho)$, it follows from \cite[Lemma 2.2]{tran} that $\norm{\mathrm{D}(v)}_2\ge \delta\rho\sqrt{n}/8\sqrt{2}$. In Lemma \ref{lemma 3.4}, we set $b=\delta\rho/8\sqrt{2}$ and $\alpha=\mu n$ with $\mu>0$, and let $0<\gamma<\delta\rho/12$.
Combining Lemma \ref{lemma 3.3}, we then obtain
\[
\mathcal{L}\left(\sum_{i=1}^nv_i\eta_i,\varepsilon\sqrt{p}\right)\le C\varepsilon +\frac{C}{\sqrt{d}}+Ce^{-4\mu^2d},
\]
where the constant $C>0$ depends only on $p, \delta,\rho$, and $\gamma$.
\end{remark}

\subsection{Operator norm}

The following lemma establishes an upper bound on the operator norm of $M_n$. Specifically, for $d=n/2$, K. Yu \cite[Proposition 2.1]{yusigned} showed that, with high probability, $\norm{M_n}=O(\sqrt{n})$. This result is generalized below to the regime $d=pn$ for $p\in(0,1/2]$. The proof is analogous to that of \cite[Proposition 2.1]{yusigned} and is therefore omitted.
\begin{lemma}\label{lemma 2.3}
Let $p\in(0,1/2]$ and $d\le n$ be positive integers such that $ d=pn$. Let $M$ be an $m\times n$ random matrix, $1\le m \le n$, whose rows are independent vectors uniformly distributed over the set of vectors in $\{-1,0,1\}^{n}$ with exactly $d$ nonzero entries. Then there exist positive constants $C_{\ref{lemma 2.3}},c_{\ref{lemma 2.3}}$ depending only on p such that for all $t\ge C_{\ref{lemma 2.3}}$, we have
\[
\mathbb{P}\left(\norm{M}\ge t\sqrt{n}\right)\le2e^{-c_{\ref{lemma 2.3}}t^2n}.
\]
\end{lemma}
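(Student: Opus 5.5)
The plan is a standard $\varepsilon$-net argument. It becomes almost trivial once we use the support and sign representation from the introduction. The rows of $M$ are independent, and each row equals $Q_i\circ\xi_i$, where the support row $Q_i$ is uniform over $0/1$ vectors with exactly $d$ ones and $\xi_i$ is an independent Rademacher vector. Hence $M\stackrel{d}{=}Q\circ\Xi$, with $Q$ an $m\times n$ support matrix and $\Xi$ an independent $m\times n$ matrix of i.i.d.\ signs. The first step is to condition on $Q$. After this, $M=Q\circ\Xi$ has independent entries $Q_{ij}\Xi_{ij}$, each either identically $0$ or a Rademacher sign.

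The second step is a tail bound for a fixed pair of directions. Fix $x\in\bS^{n-1}$ and $y\in\bS^{m-1}$. Then
\[
\ip{Mx}{y}=\sum_{i=1}^m\sum_{j=1}^n y_i x_j Q_{ij}\Xi_{ij}.
\]
Conditionally on $Q$, this is a Rademacher sum with coefficient vector $(y_ix_jQ_{ij})_{i,j}$. Its squared $\ell_2$ norm is at most $\sum_{i,j}y_i^2x_j^2=1$, deterministically. By Hoeffding's inequality,
\[
\mathbb P\left(|\ip{Mx}{y}|> s\,\middle|\,Q\right)\le 2e^{-s^2/2}.
\]
Taking expectation over $Q$ gives the same unconditional bound, uniformly in the support configuration. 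This is where the signed model is easier than the unsigned one: there is no mean term in the $\mathbf 1$ direction, so no centering is needed (compare Remark~\ref{remark: comparison}). A side effect is that the argument does not even need $d=pn$.

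The third step is the union bound over nets. Let $\mathcal N\subset\bS^{n-1}$ and $\mathcal M\subset\bS^{m-1}$ be $1/4$-nets of cardinalities at most $9^n$ and $9^m\le 9^n$. The standard approximation argument gives
\[
\norm{M}\le 2\max_{x\in\mathcal N,\,y\in\mathcal M}|\ip{Mx}{y}|.
\]
Setting $s=t\sqrt n/2$ yields
\[
\mathbb P\left(\norm{M}\ge t\sqrt n\right)\le 81^n\cdot 2e^{-t^2n/8}.
\]
For $t\ge C_{\ref{lemma 2.3}}$ with $C_{\ref{lemma 2.3}}^2\ge 16\ln 81$, we have $81^n\le e^{t^2n/16}$. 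The right-hand side is then at most $2e^{-t^2n/16}$, so $c_{\ref{lemma 2.3}}=1/16$ works.

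There is no real obstacle here. The only point that needs care is the conditioning: the Hoeffding bound must be uniform over all support matrices $Q$. It is, because the coefficient norm is bounded by $1$ whatever $Q$ is. This uniformity is what allows $Q$ to be integrated out before taking the union bound.
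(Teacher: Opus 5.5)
Your proof is correct: conditioning on the support matrix turns $\langle Mx,y\rangle$ into a Rademacher sum whose coefficient vector has $\ell_2$-norm at most $1$ for every $Q$. Hoeffding's inequality, the $1/4$-net bound $\|M\|\le 2\max_{x\in\mathcal N,\,y\in\mathcal M}|\langle Mx,y\rangle|$, and a union bound over at most $81^n$ pairs then give the claim with $c=1/16$ and $C=\sqrt{16\ln 81}$.

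The paper omits this proof and defers to K.~Yu's Proposition~2.1. Your argument is the standard net argument built on the same support/sign conditioning and $\cosh$ bound that the paper uses in the proof of Lemma~\ref{lemma 2.2}, so it is essentially the intended route. As you note, it also shows the constants can be taken independent of $p$.
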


\section{Fixed vectors and almost-constant vectors estimates}\label{sec3}

\subsection{Invertibility over fixed vectors}

We need the following lemma to establish the invertibility of $M_n$ on a single vector $v \in \bS^{n-1}$.
\begin{lemma}\label{lemma 2.2}
Let $p\in(0,1/2]$ and $d\le n$ be positive integers such that $ d=pn$. There exist positive constants $C_{\ref{lemma 2.2}},c_{\ref{lemma 2.2}}$ depending only on $p$ such that the following holds. Let $M$ be an $m\times n$ random matrix, $n/2 \le m\le n$, 
whose rows are independent vectors uniformly distributed over the set of all vectors in $\{-1,0,1\}^{n}$ with exactly $d$ nonzero entries. Then for any fixed vector $v\in \bS^{n-1} $, we have
\begin{equation}
\mathbb{P} \left(\norm{Mv}_2\le C_{\ref{lemma 2.2}}\sqrt{n}\right)\le e^{-c_{\ref{lemma 2.2}}n}.
\nonumber
\end{equation}
\end{lemma}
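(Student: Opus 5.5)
We will use the standard tensorization argument: a uniform small-ball estimate for one row, followed by a Chernoff-type bound over the $m$ independent rows. Write $R_1,\dots,R_m$ for the rows of $M$, so that $\norm{Mv}_2^2=\sum_{k=1}^m \ip{R_k}{v}^2$ with independent summands. It suffices to find constants $\tau,\beta\in(0,1)$, depending only on $p$, such that for every fixed $v\in\bS^{n-1}$ and every row $R$,
\[
\mathbb P\bigl(|\ip{R}{v}|\le \tau\bigr)\le 1-\beta .
\]
Granting this, we take $\lambda>0$ and apply Markov's inequality to $\exp(-\lambda\sum_k \ip{R_k}{v}^2/\tau^2)$. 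This gives
\[
\mathbb P\Bigl(\sum_k \ip{R_k}{v}^2\le \tfrac{\beta}{2} m\tau^2\Bigr)\le e^{\lambda\beta m/2}\bigl(1-\beta+\beta e^{-\lambda}\bigr)^m ,
\]
which is at most $e^{-c m}$ for a suitable fixed $\lambda$ depending on $\beta$. Since $m\ge n/2$, this yields the lemma with $C_{\ref{lemma 2.2}}=\tau\sqrt{\beta/4}$ and $c_{\ref{lemma 2.2}}=c/2$.

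To obtain the single-row estimate, we condition on the support $S$ of $R$ (a uniform $d$-subset), so that $\ip{R}{v}=\sum_{i\in S}\xi_i v_i$ with independent Rademacher signs $\xi_i$. We then argue in three steps.
\begin{enumerate}
\item The second moment is $Y:=\mathbb E[\ip{R}{v}^2\mid S]=\sum_{i\in S}v_i^2$. Because the fourth-moment bound below controls $\ip{R}{v}^2$ only in terms of $Y$, it is enough to lower-bound $Y$ with constant probability.
\item For a Rademacher sum, Khintchine's inequality gives $\mathbb E[\ip{R}{v}^4\mid S]\le 3Y^2$. The Paley--Zygmund inequality then yields
\[
\mathbb P\bigl(\ip{R}{v}^2\ge Y/2\mid S\bigr)\ge \tfrac{1}{12}.
\]
\item It remains to show $\mathbb P(Y\ge p/2)\ge c_p$. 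We have $\mathbb EY=p$, since each coordinate lies in $S$ with probability $p$. Also $Y\le \sum_i v_i^2=1$ deterministically. Hence by reverse Markov,
\[
\mathbb P(Y\ge p/2)\ge \frac{p-p/2}{1-p/2}\ge \frac p2 .
\]
\end{enumerate}
Combining the three steps gives $\mathbb P\bigl(\ip{R}{v}^2\ge p/4\bigr)\ge p/24$. So we may take $\tau=\sqrt p/2$ and $\beta=p/24$.

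I expect the main subtlety to be that $v$ may be very localized; for example, $v=e_1$ is in the support only with probability $p$. The bound therefore cannot come from concentration of $Y$ and must use only its expectation, which is why the argument relies on the deterministic bound $Y\le 1$ and reverse Markov. Note that this lemma needs no CLCD input: the bounded-scale small-ball estimate follows from second and fourth moments alone. That is also why the resulting bound $e^{-cn}$ is uniform over all $v$.
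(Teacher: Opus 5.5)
Your proof is correct and follows essentially the same route as the paper: a single-row small-ball estimate from second and fourth moments via Paley--Zygmund, then tensorization over the $m\ge n/2$ independent rows.

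The differences are only technical. You apply Paley--Zygmund conditionally on the support $S$: Khintchine gives $\mathbb{E}[\langle R,v\rangle^4\mid S]\le 3Y^2$, and reverse Markov with $Y\le 1$ handles localized $v$, which yields $\beta=p/24$. The paper instead applies Paley--Zygmund unconditionally, bounding the fourth moment through the subgaussian estimate $\mathbb{E}e^{\lambda X}\le e^{\lambda^2/2}$, which yields $p^2/(4c_1)$. It then tensorizes with a Chernoff bound on the number of good rows rather than your exponential Markov bound.
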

\begin{proof}
Let $q$ be a random vector uniformly distributed over the set of all vectors in $\{-1,0,1\}^n$ with exactly $d$ nonzero entries. By definition, $q$ and $\xi \circ \eta$ are identical in distribution, where $\xi$ is uniformly distributed over the set of all vectors in $\{0,1\}^{n}$ with exactly $d$ ones and $\eta$ is an i.i.d.\ sign vector with $\mathbb{P} \left( \eta_i=1\right)=\mathbb{P}\left(\eta_i=-1\right)=1/2$ for $i\in[n]$, where $\xi$ and $\eta$ are independent vectors. Then one can write
\[
q_i \stackrel{d}{=} \eta_i \xi_i,  \qquad \forall \, i \in [n].
\] 
Since $\eta_i$ is independent of $\xi_i$ and satisfies $\mathbb{E}(\eta_i)=0$, we have 
\begin{equation}\label{eq:2}
\mathbb{E} (q_i)=\mathbb{E}\left(\eta_i \xi_i\right)=\mathbb{E}(\eta_i)\mathbb{E}(\xi_i)=0. 
\end{equation}
If $i\ne j$, we have $\mathbb{E}(\eta_i\eta_j)=\mathbb{E}(\eta_i)\mathbb{E}(\eta_j)=0$. If $i=j$, we have $\mathbb{E}(\eta_i^2)=1$ and $\mathbb{E}(\xi_i^2)=d/n$. Then we obtain
\begin{equation}
\label{eq:3}
\operatorname{Cov}\left(q_i,q_j\right)=\mathbb{E}(q_i q_j)=\mathbb{E}\left(\eta_i \xi_i\eta_j \xi_j\right)=\mathbb{E}\left(\eta_i \eta_j\right)\mathbb{E}\left(\xi_i\xi_j\right)=\frac{d}{n}\delta_{ij}.
\end{equation}
For any fixed $v\in \bS^{n-1}$, we
denote $X:=\sum_{i=1}^n v_iq_i$. Since (\ref{eq:2}) and (\ref{eq:3}), we obtain 
\[
\mathbb{E} X=\mathbb{E}\left(\sum_{i=1}^{n}v_iq_i\right)=\sum_{i=1}^nv_i\mathbb{E}(q_i)=0,
\] 
and
\[
\mathbb{E} X^2=\mathbb{E}\left[\left(\sum_{i=1}^{n}v_iq_i\right)^2\right]=\sum_{i=1}^n v_i^2\mathbb{E}\left(q_i^2\right)+\sum_{i\ne j}v_i v_j\mathbb{E}(q_i q_j)=\frac{d}{n}=p.
\]

Next, we show that $X$ is a subgaussian random variable. For any $\lambda \in \R$, we have
\[
\mathbb{E} e^{\lambda X}=\mathbb{E}_\xi\left[\mathbb{E}_\eta\exp\left({\lambda\sum_{i=1}^nv_i\eta_i\xi_i}\right)\right],
\]
where $\mathbb{E}_\eta$ and $\mathbb{E}_\xi$ denote the expectations taken with respect to $\eta$ and $\xi$. For each fixed $\xi
$, the random variables $\eta_1,\dots,\eta_n$ are independent, so
\[
\mathbb{E}_\eta\exp\left({\lambda\sum_{i=1}^nv_i\eta_i\xi_i}\right)=\prod_{i=1}^n\mathbb{E}_\eta \exp\left(\lambda v_i \eta_i \xi_i\right).
\]
Since $\eta_i$ is a Rademacher random variable, we have
\[
\mathbb{E}_\eta \exp\left(\lambda v_i \eta_i \xi_i\right)=\cosh(\lambda v_i\xi_i)\le\exp\left(\frac{\lambda^2v_i^2\xi_i^2}{2}\right)\le\exp\left(\frac{\lambda^2v_i^2}{2}\right),
\]
where the first inequality follows from the standard bound $\cosh{t}\le e^{t^2/2}$ for all $t\in \mathbb{R}$ and the last inequality follows from $\xi_i^2\le 1$.

Finally, we obtain
\begin{align*}
\mathbb{E} e^{\lambda X}&=\mathbb{E}_\xi\left[\mathbb{E}_\eta\exp\left({\lambda\sum_{i=1}^nv_i\eta_i\xi_i}\right)\right]\\
&=\mathbb{E}_\xi\left[\prod_{i=1}^n\mathbb{E}_\eta \exp\left(\lambda v_i \eta_i \xi_i\right)\right]\\
&\le\mathbb{E}_\xi \left[\prod_{i=1}^n\exp\left(\frac{\lambda^2v_i^2\xi_i^2}{2}\right)\right]\\
&\le\exp\left(\frac{\lambda^2}{2}\sum_{i=1}^n v_i^2\right)=\exp\left(\frac{\lambda^2}{2}\right).
\end{align*}
Thus, $X$ is a mean-zero subgaussian random variable. Then there exists an absolute constant $c_0$ such that the following holds 
\[
\mathbb{P} \left(|X| \ge t\right)\le2\exp\left(-c_0t^2\right),
\]for all $t\ge 0$. Hence, we have the following estimates
\[
\mathbb{E} X^4=\int_0^\infty4t^3\mathbb{P}\left(|X|\ge t\right)dt\le\int_0^\infty8t^3\exp\left(-c_0t^2\right)dt=\frac{4}{c_0^2}.
\]
We denote $c_1:=\max\{4/c_0^2,1\}$. By the Paley-Zygmund inequality and $\mathbb{E} X^2=p\in(0,1/2]$, we have
\[
\mathbb{P}\left( X^2\ge\frac{1}{2}p(1-p)\right)\ge\mathbb{P}\left(X^2\ge\frac{1}{2}\mathbb{E} X^2\right)\ge\frac{(\mathbb{E} X^2)^2}{4\mathbb{E} X^4}\ge\frac{p^2}{4c_1}.
\]
Since $1-p\ge1/2$, we have
\begin{equation}\label{eq:4}
\mathbb{P}\left(|X|\ge\frac{\sqrt{p}}{2}\right)\ge\mathbb{P}\left( X^2\ge\frac{1}{2}p(1-p)\right)\ge\frac{p^2}{4c_1}=:c_2\in(0,1/2],
\end{equation}
where $c_2$ depends only on $p$.

Let $q_1,\dots,q_m$ be the independent rows of $M$ and denote $X_i:=\ip{q_i}{v}$, $Y_i:=\mathbf{1}_{\{|X_i|\ge\sqrt{p}/2\}}$, for $i \in[m]$. From (\ref{eq:4}), we have $\mathbb{P}(Y_i=1)\ge c_2$. Let $G:=\sum_{i=1}^mY_i$. We have 
\[
\mathbb{E} G=\sum_{i=1}^m \mathbb{E} Y_i\ge c_2m\ge\frac{c_2n}{2}.
\]
Since $\{Y_i\}_{i=1}^{m}$ are independent Bernoulli random variables, we use the Chernoff inequality (see, e.g., \cite[Remark 2.3.2]{Vershynin_2026} lower tails with $t=\mu/2$), obtaining
\[
\mathbb{P}\left(G\le\frac{1}{2}\mathbb{E} G\right)\le \exp\left( -\frac{1}{8}\mathbb{E} G\right).
\]
Since $\mathbb{E} G\ge c_2m\ge c_2n/2$, we have
\[
\mathbb{P}\left(G\le\frac{c_2n}{4}\right)\le\mathbb{P}\left(G\le\frac{1}{2}\mathbb{E} G\right)\le\exp\left(-\frac{1}{8} \mathbb{E} G\right)\le\exp\left(-\frac{c_2n}{16}\right).
\]
Consider the $\norm{Mv}_2^2=\sum_{i=1}^{m}X_i^2$. If the event $\{G>c_2n/4\}$ occurs , then we obtain
\[
\norm{Mv}_2^2=\sum_{i=1}^{m}X_i^2\ge\sum_{i\in[m]:Y_i=1}X_i^2 \ge\frac{c_2n}{4}\left(\frac{\sqrt{p}}{2}\right)^2=\frac{c_2np}{16}.
\]
Therefore,
\[
\mathbb{P}\left(\norm{Mv}_2>\frac{\sqrt{c_2p}}{4}\sqrt{n}\right)\ge1-\exp\left(-\frac{c_2n}{16}\right),
\] let $C_{\ref{lemma 2.2}}=\sqrt{c_2p}/4$ and $c_{\ref{lemma 2.2}}=c_2/16$. This completes the proof.
\end{proof}

\subsection{Invertibility over almost-constant vectors}

The main goal of this section is to establish an invertibility estimate for $M_n$ on the set of almost-constant vectors. More precisely, we prove the following proposition, which shows that $\norm{M_nv}_2$ is uniformly bounded away from zero for all $v\in\operatorname{Cons}(\delta,\rho)$.
\begin{proposition}\label{proposition 2.4}
Let $p\in(0,1/2]$ and $d,n$ be sufficiently large positive integers such that $d=pn$. There exist constants $C_{\ref{proposition 2.4}},c_{\ref{proposition 2.4}},\delta, \rho\in(0,1)$ depending only on $p$ such that the following holds. Let $M$ be an $m \times n$ random matrix, $n/2 \le m\le n$, whose rows are independent vectors uniformly distributed over the set of all vectors in $\{-1,0,1\}^{n}$ with exactly $d$ nonzero entries. Then
\[
\mathbb P\left(\inf_{v\in \operatorname{Cons}(\delta,\rho)}\norm{M v}_2\le C_{\ref{proposition 2.4}}\sqrt{n}\right)\le e^{-c_{\ref{proposition 2.4}}n}.
\]
\end{proposition}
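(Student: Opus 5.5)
We prove Proposition \ref{proposition 2.4} by a net argument that combines the single-vector bound of Lemma \ref{lemma 2.2} with the operator norm bound of Lemma \ref{lemma 2.3}. The point is that $\operatorname{Cons}(\delta,\rho)$ has small metric entropy once $\delta$ and $\rho$ are small. Fix $t=C_{\ref{lemma 2.3}}$ and set $K:=t\sqrt{n}$. We work on the event $\mathcal E:=\{\norm{M}\le K\}$, whose complement has probability at most $2e^{-c_{\ref{lemma 2.3}}t^2 n}$.

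\textbf{Step 1: a net for almost-constant vectors.}
Every $v\in\operatorname{Cons}(\delta,\rho)$ can be written as $v=\lambda\mathbf 1+w+u$, where:
\begin{itemize}
\item $|\lambda|\le 2/\sqrt n$ (since $\norm{v}_2=1$ and at least $(1-\delta)n\ge n/2$ coordinates lie within $\rho/\sqrt n$ of $\lambda$);
\item $w$ is supported on a set of size at most $\delta n$, with $\norm{w}_2\le 3$;
\item $\norm{u}_\infty\le\rho/\sqrt n$, so $\norm{u}_2\le\rho$.
\end{itemize}
We build the net $\mathcal N$ as follows. Discretize $\lambda\mathbf 1$ by a $\rho$-net of the segment $\{\lambda\mathbf 1:|\lambda|\le 2/\sqrt n\}$; this needs $O(1/\rho)$ points. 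Choose the support of $w$ in $\binom{n}{\delta n}\le (e/\delta)^{\delta n}$ ways. On each such coordinate subspace, use a $\rho$-net of the ball of radius $3$, of size $(9/\rho)^{\delta n}$. The resulting vectors lie within $2\rho$ of $\operatorname{Cons}(\delta,\rho)$. Projecting them onto $\operatorname{Cons}(\delta,\rho)$ (or onto the sphere) gives a $C\rho$-net $\mathcal N\subset \bS^{n-1}$ of $\operatorname{Cons}(\delta,\rho)$ with
\[
|\mathcal N|\le \frac{C}{\rho}\left(\frac{9e}{\delta\rho}\right)^{\delta n}.
\]

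\textbf{Step 2: union bound.}
By Lemma \ref{lemma 2.2} and a union bound,
\[
\mathbb P\left(\exists y\in\mathcal N:\norm{My}_2\le C_{\ref{lemma 2.2}}\sqrt n\right)\le |\mathcal N|\,e^{-c_{\ref{lemma 2.2}}n}.
\]
Now consider any $v\in\operatorname{Cons}(\delta,\rho)$ and pick $y\in\mathcal N$ with $\norm{v-y}_2\le C\rho$. On $\mathcal E$,
\[
\norm{Mv}_2\ge \norm{My}_2-K C\rho\ge C_{\ref{lemma 2.2}}\sqrt n-tC\rho\sqrt n.
\]

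\textbf{Step 3: choice of parameters.}
First choose $\rho$ so that $tC\rho\le C_{\ref{lemma 2.2}}/2$. This is possible because $t$ and $C_{\ref{lemma 2.2}}$ depend only on $p$. Then choose $\delta$ small enough that
\[
\delta\log\left(\frac{9e}{\delta\rho}\right)\le \frac{c_{\ref{lemma 2.2}}}{2}.
\]
This works since $\delta\log(1/\delta)\to0$ as $\delta\to0$. With these choices $|\mathcal N|e^{-c_{\ref{lemma 2.2}}n}\le (C/\rho)e^{-c_{\ref{lemma 2.2}}n/2}$, and the total failure probability is at most $e^{-c n}$ for $n$ large. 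We conclude with $C_{\ref{proposition 2.4}}=C_{\ref{lemma 2.2}}/2$.

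\textbf{Main obstacle.}
The delicate point is the order of the constants. The net mesh $\rho$ must be tied to the operator norm constant $t$ and to $C_{\ref{lemma 2.2}}$. The sparsity parameter $\delta$ must then be chosen after $\rho$, so that the entropy $\delta n\log(1/(\delta\rho))$ is beaten by $c_{\ref{lemma 2.2}}n$.

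This is exactly where the signed model is easier than the unsigned one. Because the rows are centered, $\norm{M}=O(\sqrt n)$ holds in every direction, including $\mathbf 1$. So the constant component $\lambda\mathbf 1$ needs no special treatment: a one-dimensional net suffices, and there is no need to separate out the $\mathbf 1$ direction as in the $Q_n$ setting.
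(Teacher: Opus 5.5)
Your argument is correct and follows the paper's route: a net for $\operatorname{Cons}(\delta,\rho)$, then the single-vector bound of Lemma~\ref{lemma 2.2} with a union bound, then the operator norm bound of Lemma~\ref{lemma 2.3} to pass from the net to all of $\operatorname{Cons}(\delta,\rho)$. The only difference is that you build the net by hand, with mesh $C\rho$ and cardinality depending on both $\delta$ and $\rho$ (so $\delta$ must be chosen after $\rho$), whereas the paper quotes Tran's $(\delta+2\rho)$-net of size $\exp(2\delta\log(5/\delta)n)$, which decouples the two smallness conditions; your covering radius before normalization is actually $3\rho$ rather than $2\rho$, but the $C\rho$ mesh absorbs this.
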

The proof of Proposition \ref{proposition 2.4} relies on a discretization argument. Specifically, we use the following net for the set of almost-constant vectors; its detailed construction can be found in the proof of \cite[Lemma 2.12]{tran}.
\begin{lemma}\label{lemma 2.5}
Let $\delta, \rho\in(0,1/12)$, and suppose that $n$ is sufficiently large with respect to $\delta$ and $\rho$. Then there exists a $(\delta+2\rho)$-net $\mathcal{N}\subset\frac{3}{2}B^n_2\setminus\frac{1}{2}B^n_2$ for $\operatorname{Cons}(\delta,\rho)$ with respect to the Euclidean norm, such that $|\mathcal{N}|\le\exp(2\delta\log(5/\delta)n)$. 
\end{lemma}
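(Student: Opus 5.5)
The plan is a direct product construction: discretize separately the exceptional coordinate set, the common level $\lambda$, and the exceptional coordinates themselves. Fix $v\in\operatorname{Cons}(\delta,\rho)$, and let $\lambda\in\R$ and $I\subset[n]$ with $|I|\ge(1-\delta)n$ be such that $|v_i-\lambda|\le\rho/\sqrt n$ for $i\in I$. Put $J=[n]\setminus I$, so $|J|\le\delta n$; enlarging $J$ if necessary we may take $|J|=\lfloor\delta n\rfloor$.

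First, I bound the level: $\|\lambda\mathbf 1_I\|_2\le\|v_I\|_2+\|v_I-\lambda\mathbf 1_I\|_2\le 1+\rho$, so $|\lambda|\le(1+\rho)/\sqrt{(1-\delta)n}\le 2/\sqrt n$. Let $\Lambda$ be the grid $\frac{\rho}{\sqrt n}\mathbb Z\cap[-2/\sqrt n,2/\sqrt n]$, so $|\Lambda|\le 5/\rho$. For each $J$ of size $\lfloor\delta n\rfloor$, let $\mathcal N_J$ be a $\delta$-net of the unit ball $B_2^J$ of $\R^J$ (coordinates supported on $J$), of cardinality at most $(3/\delta)^{\delta n}$ by the standard volumetric bound. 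Then form the candidates
\[
u=\lambda'\mathbf 1_{[n]\setminus J}+w,\qquad \lambda'\in\Lambda,\ w\in\mathcal N_J .
\]
Choosing $\lambda'\in\Lambda$ with $|\lambda-\lambda'|\le\rho/\sqrt n$ and $w\in\mathcal N_J$ with $\|v_J-w\|_2\le\delta$ (possible since $\|v_J\|_2\le1$), the triangle inequality gives
\[
\|v-u\|_2\le\|v_I-\lambda\mathbf 1_I\|_2+\|(\lambda-\lambda')\mathbf 1_I\|_2+\|v_J-w\|_2\le\rho+\rho+\delta .
\]
Next, I discard every candidate that is not within $\delta+2\rho$ of some point of $\operatorname{Cons}(\delta,\rho)$; this gives $\mathcal N$. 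Every remaining $u$ lies within $\delta+2\rho<1/4$ of a unit vector, hence $\mathcal N\subset\frac32B_2^n\setminus\frac12B_2^n$.

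Finally, the counting, which is the only place requiring care. Using $\binom{n}{\lfloor\delta n\rfloor}\le(e/\delta)^{\delta n}$,
\[
|\mathcal N|\le \Big(\frac e\delta\Big)^{\delta n}\Big(\frac3\delta\Big)^{\delta n}\frac5\rho=\exp\Big(\delta n\log\frac{3e}{\delta^2}+\log\frac5\rho\Big).
\]
Since $3e<25$, we have $\delta n\log(3e/\delta^2)=2\delta n\log(5/\delta)-\delta n\log(25/(3e))$. The deficit $\delta n\log(25/(3e))$ grows linearly in $n$, so it absorbs $\log(5/\rho)$ once $n$ is large in terms of $\delta,\rho$. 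This yields $|\mathcal N|\le\exp(2\delta\log(5/\delta)n)$. The main thing to watch is exactly this constant bookkeeping. If the crude binomial and net bounds were too lossy, I would use a slightly finer estimate (e.g.\ $(1+2/\delta)^{\delta n}$ for the net), but the margin $25>3e$ appears sufficient.
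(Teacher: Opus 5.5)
Your proposal is correct and is essentially the standard product construction that the paper relies on (it gives no proof of its own and defers to the proof of Tran's Lemma~2.12). That construction discretizes the exceptional coordinate set, the level $\lambda$ on a $\rho/\sqrt{n}$-grid, and the exceptional coordinates via a $\delta$-net of $B_2^J$, and absorbs the $\log(5/\rho)$ term using the slack $25>3e$ once $n$ is large in terms of $\delta,\rho$.
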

\begin{remark}
In the proof below, the approximation error is bounded by the operator norm of $M_n$. Indeed,  if $v\in \operatorname{Cons}(\delta,\rho)$ and $u\in \mathcal{N}$ such that $\|v-u\|_2\le \delta+2\rho$, then on the event $\|M_n\|\le C_{\ref{lemma 2.2}}\sqrt{n}$,
\[
\norm{M_n(v-u)}_2\le (\delta+2\rho) C_{\ref{lemma 2.2}}\sqrt{n}.
\]
Thus, choosing $\delta, \rho$ sufficiently small gives the required net approximation error (see Remark~\ref{remark: comparison} for a comparison with the unsigned model). 
\end{remark}

\begin{proof}
Let $C_{\ref{lemma 2.2}},c_{\ref{lemma 2.2}},C_{\ref{lemma 2.3}},c_{\ref{lemma 2.3}}$ be the constants from Lemma \ref{lemma 2.2} and Lemma \ref{lemma 2.3}. We assume $\delta ,\rho\in(0,1/12)$ is sufficiently small and will choose it later to satisfy the conditions. We will estimate the probability of the following event
\[
\mathcal{E}:=\left\{ \inf_{v\in\operatorname{Cons}(\delta,\rho)}\norm{Mv}_2\le\frac{C_{\ref{lemma 2.2}}}{4}\sqrt{n}\ \text{and}\ \norm{M}< C_{\ref{lemma 2.3}}\sqrt{n}\right\}.
\]   
By Lemma \ref{lemma 2.3}, we have
\[       \mathbb{P}\left(\norm{M}\ge C_{\ref{lemma 2.3}}\sqrt{n}\right)\le 2e^{-cn},
\]
where $c=c_{\ref{lemma 2.3}}C_{\ref{lemma 2.3}}^2$.
Next, we suppose $\mathcal{E}$ occurs. Then 
\[
\norm{M}<C_{\ref{lemma 2.3}}\sqrt{n},
\]
and there exists a vector $v\in\operatorname{Cons}(\delta,\rho)$ such that
\[
\norm{Mv}_2\le\frac{C_{\ref{lemma 2.2}}}{4}\sqrt{n}.
\]
By Lemma \ref{lemma 2.5}, there is a vector $u\in\mathcal{N}$ such that 
\[1/2\le\norm{u}_2\le3/2,\qquad\norm{v-u}_2\le\delta+2\rho.
\]
Thus, we obtain
\[
\norm{Mu}_2\le\norm{Mv}_2+\norm{M(v-u)}_2\le\frac{C_{\ref{lemma 2.2}}}{4}\sqrt{n}+(\delta+2\rho)C_{\ref{lemma 2.3}}\sqrt{n}\le\frac{C_{\ref{lemma 2.2}}}{2}\sqrt{n},
\]
provided that
\begin{equation}\label{eq:5}
\delta+2\rho\le\frac{C_{\ref{lemma 2.2}}}{4C_{\ref{lemma 2.3}}},
\end{equation}
and $n$ is large enough. Then we have
\[
\mathbb{P}(\mathcal{E})\le\mathbb{P}\left(\inf_{u\in\mathcal{N}}\norm{Mu}_2\le\frac{C_{\ref{lemma 2.2}}}{2}\sqrt{n}\right)\le|\mathcal{N}|\max_{u\in\mathcal{N}}\mathbb{P}\left(\norm{Mu}_2\le\frac{C_{\ref{lemma 2.2}}}{2}\sqrt{n}\right).
\]
By Lemma \ref{lemma 2.2}, we obtain
\begin{align*}
\mathbb{P}\left(\norm{Mu}_2\le\frac{C_{\ref{lemma 2.2}}}{2}\sqrt{n}\right)=\mathbb{P}\left(\left\|M\frac{u}{\norm{u}_2}\right\|_2\le\frac{C_{\ref{lemma 2.2}}}{2\norm{u}_2}\sqrt{n}\right)&\le\mathbb{P}\left(\norm{Mu'}_2\le C_{\ref{lemma 2.2}}\sqrt{n}\right)\le e^{-c_{\ref{lemma 2.2}}n}.
\end{align*}

Combining the above estimates, we obtain 
\[
\mathbb{P}(\mathcal{E})\le|\mathcal{N}|\max_{u\in\mathcal{N}}\mathbb{P}\left(\norm{Mu}_2\le\frac{C_{\ref{lemma 2.2}}}{2}\sqrt{n}\right)\le\exp(2\delta\log(5/\delta)n)\cdot e^{-c_{\ref{lemma 2.2}}n}\le\exp\left(-\frac{c_{\ref{lemma 2.2}}}{2}n\right),
\]assuming that $\delta$ satisfies
\begin{equation}\label{eq:6}
2\delta\log(5/\delta)\le\frac{c_{\ref{lemma 2.2}}}{2}.
\end{equation}
We choose $\delta$ and $\rho$ sufficiently small so that conditions (\ref{eq:5}) and (\ref{eq:6}) are satisfied. Assuming $n$ is sufficiently large, we then obtain
\begin{align*}
\mathbb{P}\left(\inf_{v\in \operatorname{Cons}(\delta,\rho)}\norm{Mv}_2\le C_{\ref{proposition 2.4}}\sqrt{n}\right)&\le \mathbb{P}(\mathcal{E})+\mathbb{P}\left(\norm{M}\ge C_{\ref{lemma 2.3}}\sqrt{n}\right)\le
e^{-c_{\ref{proposition 2.4}}n},
\end{align*}
where $C_{\ref{proposition 2.4}}=C_{\ref{lemma 2.2}}/4$, and the constant $c_{\ref{proposition 2.4}}$ is chosen appropriately. This completes the proof.
\end{proof}

\section{Proof of Theorem \ref{theorem 1.1}}\label{sec4}

The proof follows the framework introduced by Rudelson and Vershynin in \cite{theleast} (see also \cite{tao2023topics, anuppersquare, lianupper}).
The key observation is the following: when $M_n$ is invertible,
\[
\exists x \in \R^n: \norm{x}_2\le u,\quad \norm{(M_n^T)^{-1}x}_2\ge v\sqrt{n} \implies s_n(M_n)\le \frac{u}{v\sqrt{n}}.
\]
Thus, it suffices to construct such a vector $x$.

Let $X_i:=M_n^Te_i$ for all $i\in[n]$, and define $H_1:=\operatorname{span}\{X_2,\dots,X_n\}$. Denote by $P_1:\R^n\to H_1$ the orthogonal projection onto $H_1$, and let
\[
x:=X_1-P_1X_1.
\]
The following lemma provides a probabilistic upper bound on $\norm{x}_2$.
\begin{lemma}\label{lemma 4.2}
Let $d,n$ be positive integers with $d=pn$, where $p\in(0,1/2]$. Then for every $s>0$, the random vector $x=X_1-P_1X_1$ satisfies
\[
\mathbb{P}\left(\norm{x}_2\ge s\right)\le\frac{p}{s^2}+\mathbb{P}\left(\mathcal{E}^c_0\right)
\]
where $\mathcal{E}_0$ denotes the event that $\{M_n\ \text{is invertible}\}$.
\end{lemma}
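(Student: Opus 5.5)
The plan is to condition on the rows $X_2,\dots,X_n$ and apply Markov's inequality to $\norm{x}_2^2$. Here $X_i = M_n^T e_i$ is the $i$-th row of $M_n$, viewed as a column vector. The rows are independent, so $X_1$ is independent of $H_1$ and hence of $P_1$. Since $x = X_1 - P_1X_1$, we have $\norm{x}_2 = \operatorname{dist}(X_1,H_1) = \norm{P_{H_1^\perp}X_1}_2$, where $P_{H_1^\perp} = \mathbf I_n - P_1$ is the orthogonal projection onto $H_1^\perp$.

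Step one is to write
\[
\mathbb P(\norm{x}_2\ge s)\le \mathbb P(\{\norm{x}_2\ge s\}\cap \mathcal E_0)+\mathbb P(\mathcal E_0^c).
\]
On $\mathcal E_0$ the vectors $X_2,\dots,X_n$ are linearly independent, so $\dim H_1 = n-1$ and $H_1^\perp$ is spanned by a single unit vector $w$. This $w$ is a function of $X_2,\dots,X_n$ only. More robustly, on the $\sigma$-algebra generated by $X_2,\dots,X_n$, consider the event $\mathcal F:=\{\dim H_1=n-1\}\supseteq \mathcal E_0$, and on $\mathcal F$ pick a measurable choice of unit normal $w$. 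Then $\norm{x}_2 = |\ip{X_1}{w}|$ on $\mathcal F$.

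Step two uses the covariance computation (\ref{eq:3}) from the proof of Lemma \ref{lemma 2.2}: $\mathbb E X_1 = 0$ and $\operatorname{Cov}(X_1) = \frac dn \mathbf I_n = p\,\mathbf I_n$. Conditioning on $X_2,\dots,X_n$ and using independence, on $\mathcal F$ we get
\[
\mathbb E\left[\ip{X_1}{w}^2 \mid X_2,\dots,X_n\right] = w^T(p\,\mathbf I_n)w = p.
\]
Conditional Markov then gives $\mathbb P(\norm{x}_2\ge s\mid X_2,\dots,X_n)\le p/s^2$ on $\mathcal F$. Taking expectations,
\[
\mathbb P(\{\norm{x}_2\ge s\}\cap\mathcal E_0)\le \mathbb P(\{\norm{x}_2\ge s\}\cap \mathcal F)\le p/s^2,
\]
which yields the claim. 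Alternatively, one can avoid choosing $w$ by noting that $\mathbb E[\norm{P_{H_1^\perp}X_1}_2^2\mid X_2,\dots,X_n] = p\operatorname{tr}(P_{H_1^\perp}) = p\dim H_1^\perp$. This equals $p$ on $\mathcal F$, and Markov applies to $\norm{x}_2^2\mathbf 1_{\mathcal F}$.

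The only delicate point is measurability and independence when conditioning: we must make sure the projection depends only on $X_2,\dots,X_n$, and that the intersection with $\mathcal E_0$ is handled through the larger event $\mathcal F$ determined by those rows. The trace formulation handles this cleanly, since $\operatorname{tr}(P_{H_1^\perp})$ needs no choice of basis. Everything else is routine.
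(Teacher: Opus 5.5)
Your proposal is correct and follows essentially the same route as the paper. You split off $\mathcal{E}_0^c$, write $\norm{x}_2=|\ip{X_1}{w}|$ for a unit normal $w$ to $H_1$ that depends only on $X_2,\dots,X_n$, and apply Markov's inequality with $\operatorname{Cov}(X_1)=p\,\mathbf I_n$ from \eqref{eq:3}. Restricting to $\mathcal F=\{\dim H_1=n-1\}$, or using the trace identity, is just a more careful treatment of the measurability that the paper handles by choosing $f_1$ as a function of $X_2,\dots,X_n$.
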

\begin{remark}
Based on \cite[Remark 1.1 and Remark 1.2]{yusigned}, we obtain $\mathbb{P}\left(\mathcal{E}^c_0\right)\le e^{-cn}$, where $c>0$ depends only on $p$. Then we have
\begin{equation}\label{eq:7} \mathbb{P}\left(\norm{x}_2\ge s\right)\le\frac{p}{s^2}+e^{-cn}.
\end{equation}
\end{remark}
\begin{proof}
Note that
\[
\mathbb{P}\left(\norm{x}_2\ge s\right)\le\mathbb{P}\left(\norm{x}_2\ge s\ \text{and}\ \mathcal{E}_0\right)+\mathbb{P}\left(\mathcal{E}^c_0\right).
\]
Let $f_1$ be any unit vector orthogonal to $X_2,\dots,X_n$. 
We can choose $f_1$ as a random vector that depends only on $X_2,\dots,X_n$ and is independent of $X_1$. If $\mathcal{E}_0$ occurs, then $H_1$ has dimension $n-1$, and $H_1^{\perp}$ has dimension $1$. In this case, we have 
\[
\norm{x}_2=\operatorname{dist}(X_1,H_1)=|\ip{X_1}{f_1}|.
\]
Then we obtain
\[
\mathbb{P}\left(\norm{x}_2\ge s\ \text{and}\ \mathcal{E}_0\right)=\mathbb{P}\left(|\ip{X_1}{f_1}|\ge s\ \text{and}\ \mathcal{E}_0\right)\le\mathbb{P}\left(|\ip{X_1}{f_1}|\ge s\right)\le\frac{\mathbb{E}|\ip{X_1}{f_1}|^2}{s^2}.
\]
Denote the $i$-th coordinate of $X_1$ and $f_1$ by $X_1(i)$ and $f_1(i)$, respectively. Then 
\[
\mathbb{E}|\ip{X_1}{f_1}|^2=\mathbb{E}\left(\sum_{i=1}^n X_1(i)f_1(i)\right)^2=\sum_{i=1}^n\mathbb{E} f_1(i)^2X_1(i)^2+\sum_{i\neq j}\mathbb{E} f_1(i)f_1(j)X_1(i)X_1(j).
\]
By (\ref{eq:3}) and the independence of $f_1$ and $X_1$, we immediately obtain 
\[
\sum_{i=1}^n\mathbb{E} f_1(i)^2X_1(i)^2=\frac{d}{n}=p,\qquad
\sum_{i\neq j}\mathbb{E} f_1(i)f_1(j)X_1(i)X_1(j)=0.
\]
This establishes the desired result.
\end{proof}

Next, we consider the quantity $\norm{(M_n^T)^{-1}x}_2$, which is initially defined only on the event $\mathcal{E}_0$. We extend this quantity to the whole probability space by setting it equal to zero on $\mathcal{E}_0^c$. This extension preserves measurability and does not affect subsequent probabilistic estimates, since $\mathbb{P}(\mathcal{E}_0^c) \le e^{-cn}$.

On the event $\mathcal{E}_0$, \[\norm{(M_n^T)^{-1}x}_2=\norm{(M_n^T)^{-1}X_1-(M_n^T)^{-1}P_1X_1}_2=\norm{e_1-(M_n^T)^{-1}P_1X_1}_2.\] 
Since $P_1X_1\in H_1$, it follows that $(M_n^T)^{-1}P_1X_1\in\operatorname{span}\{e_2,\dots,e_n\}$. Consequently, $e_1$ is orthogonal to $(M_n^T)^{-1}P_1X_1$.
Thus, we obtain
\[
\norm{(M_n^T)^{-1}x}_2^2=\norm{e_1}_2^2+\norm{(M_n^T)^{-1}P_1X_1}_2^2\ge\norm{(M_n^T)^{-1}P_1X_1}_2^2.
\]
We denote $Y_i:=P_1M_n^{-1}e_i$. Since 
\[
\ip{X_1}{Y_1}=\ip{X_1}{P_1M_n^{-1}e_1}=\ip{P_1X_1}{M_n^{-1}e_1}=0,
\]
it follows that 
\begin{align*}
\norm{(M_n^T)^{-1}x}_2^2\ge\norm{(M_n^T)^{-1}P_1X_1}_2^2=\sum_{i=1}^n\ip{(M_n^T)^{-1}P_1X_1}{e_i}^2&=\sum_{i=1}^n\ip{X_1}{P_1M_n^{-1}e_i}^2=\sum_{i=2}^n\ip{X_1}{Y_i}^2.
\end{align*}
If $\mathcal{E}_0$ occurs, the following lemma from \cite[Lemma 2.1]{theleast} describes the relationship between the vectors $\{X_i\}_{i=2}^n$ and $\{Y_i\}_{i=2}^n$.
\begin{lemma}\label{lemma 4.4}
If $\{X_i\}_{i=2}^n$ and $\{Y_i\}_{i=2}^n$ are defined as above, then $\{X_i,\,Y_i\}_{i=2}^n$ forms a complete biorthogonal system in $H_1$.
\end{lemma}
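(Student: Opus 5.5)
The plan is to verify the biorthogonality relations directly from the definitions $X_i=M_n^Te_i$ and $Y_i=P_1M_n^{-1}e_i$, working throughout on the event $\mathcal{E}_0$ on which $M_n$ is invertible. Completeness then follows from a dimension count. No probabilistic input is needed; the statement is purely linear-algebraic.

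\textbf{Step 1 (both families lie in $H_1$, which has dimension $n-1$).} For $i\ge 2$ we have $X_i\in H_1$ by the definition of $H_1$. Also $Y_i\in H_1$ for every $i$, because $P_1$ is the orthogonal projection onto $H_1$. On $\mathcal{E}_0$ the rows $X_1,\dots,X_n$ of $M_n$ are linearly independent. Hence $X_2,\dots,X_n$ are linearly independent and $\dim H_1=n-1$.

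\textbf{Step 2 (biorthogonality).} Fix $i,j\in\{2,\dots,n\}$. Since $P_1$ is self-adjoint and $P_1X_i=X_i$ for $i\ge 2$,
\[
\ip{X_i}{Y_j}=\ip{X_i}{P_1M_n^{-1}e_j}=\ip{P_1X_i}{M_n^{-1}e_j}=\ip{M_n^Te_i}{M_n^{-1}e_j}=\ip{e_i}{M_nM_n^{-1}e_j}=\delta_{ij}.
\]

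\textbf{Step 3 (completeness).} As recalled before Proposition \ref{proposition 4.1}, the relations in Step 2 force each of the families $\{X_i\}_{i=2}^n$ and $\{Y_i\}_{i=2}^n$ to be linearly independent. To see this for the $Y_j$, pair a vanishing linear combination $\sum_j a_jY_j=0$ with $X_i$, which gives $a_i=0$; the argument for the $X_i$ is symmetric. Each family therefore consists of $n-1$ independent vectors in the $(n-1)$-dimensional space $H_1$, so each spans $H_1$. Consequently $\{X_i,Y_i\}_{i=2}^n$ is a complete biorthogonal system in $H_1$.

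The only point requiring care is the projection step in Step 2, namely moving $P_1$ onto $X_i$ and using $P_1X_i=X_i$. This is exactly where the restriction $i\ge 2$ is used. It is also why $Y_1$ is excluded: $\ip{X_1}{Y_1}=0$ rather than $1$, as computed earlier in the paper. Everything else is routine.
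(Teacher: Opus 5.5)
Your proof is correct and follows essentially the same route as the source the paper relies on. The paper gives no proof of its own and simply cites \cite[Lemma 2.1]{theleast}, whose argument is the same direct verification you give. On $\mathcal{E}_0$, use the self-adjointness of $P_1$ and $P_1X_i=X_i$ for $i\ge 2$ to reduce $\langle X_i,Y_j\rangle$ to $\langle M_n^Te_i, M_n^{-1}e_j\rangle=\delta_{ij}$, and then get completeness from linear independence and $\dim H_1=n-1$.
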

Denote $H_{1,i}:=\operatorname{span}\{X_2,\dots,X_{i-1},X_{i+1},\dots,X_n\}, \, i\in\{2,\dots,n\}$. By Proposition \ref{proposition 4.1} and Lemma \ref{lemma 4.4}, we have $\norm{Y_i}_2=1/\operatorname{dist}(X_i,H_{1,i})$. Therefore, we have
\begin{equation}\label{eq:8}
    \norm{(M_n^T)^{-1}x}_2^2\ge\sum_{i=2}^n\ip{X_1}{Y_i}^2=\sum_{i=2}^n \left\langle X_1,  \frac{Y_i}{\norm{Y_i}_2}\right\rangle^2\cdot \norm{Y_i}_2^2=\sum_{i=2}^n\frac{a_i^2}{b_i^2},
\end{equation}
where we denote
\[
a_i:=\left|\left\langle X_1, {\frac{Y_i}{\norm{Y_i}_2}}\right\rangle\right|,\qquad b_i:=\norm{Y_i}_2^{-1}=\operatorname{dist}(X_i,H_{1,i})\quad \text{ for }\, i\in\{2,\dots,n\}.
\]
Note that $a_i$ and $b_i$ are well-defined only on $\mathcal{E}_0$. Equivalently, one can extend these definitions to the entire probability space by setting
\[
a_i := 
\begin{cases}
\left|\ip{X_1}{Y_i/\norm{Y_i}_2}\right|,&\text{on}\ \mathcal{E}_0 \qquad\\[1mm]
+\infty,&\text{on}\ \mathcal{E}_0^c
\end{cases}
\quad
b_i := 
\begin{cases}
\norm{Y_i}_2^{-1},& \text{on}\  \mathcal{E}_0\\[1mm]
0, & \text{on}\ \mathcal{E}_0^c
\end{cases}
\]
so that the quantities are well-defined and measurable on the entire space. This reduces the problem to establishing high-probability lower bounds for $a_i$ and upper bounds for $b_i$ for each $i\in\{2,\dots,n\}$. Without loss of generality, we focus on the case $i=2$, as the remaining cases are analogous.

First, we decompose the unit sphere $\bS^{n-1}$ into almost-constant vectors $\operatorname{Cons}(\delta,\rho)$ and non-almost-constant vectors $\bS^{n-1}\setminus \operatorname{Cons}(\delta, \rho)$. We will show that $H_{1,2}^\perp\cap \bS^{n-1}$ consists of non-almost-constant vectors with high probability. Consider the $(n-2)\times n$ matrix $A$, which is the submatrix of $M_n$ consisting of the rows $X_3,\dots,X_n$. Since $H_{1,2}$ is spanned by $\{X_3,\dots,X_n\}$, we have $H_{1,2}^\perp \subset \ker {A}$. By Proposition \ref{proposition 2.4}, there exist positive constants $C_{\ref{proposition 2.4}},c_{\ref{proposition 2.4}},\delta,\rho\in(0,1)$, depending only on $p$, such that
\begin{equation}\label{eq:9}
\mathbb{P}\left(H_{1,2}^\perp\cap\operatorname{Cons}(\delta,\rho)=\emptyset\right)\ge\mathbb{P}\left(\inf_{v\in \operatorname{Cons}(\delta,\rho)}\norm{Av}_2\ge C_{\ref{proposition 2.4}}\sqrt{n}\right)\ge 1-e^{-c_{\ref{proposition 2.4}}n}.
\end{equation}
Fix $\mu>0$ and $\gamma\in(0,\delta\rho/12)$. Denote the event
\[
\mathcal{E}_1:=\left\{\forall v\in H_{1,2}^\perp\cap\bS^{n-1}: \quad\operatorname{CLCD}_{\mu n,\gamma}(v)\ge\frac{\sqrt{\delta n}}{7}\quad   \mathrm{ and }\quad \norm{\mathrm{D}(v)}_2\ge \frac{\delta \rho\sqrt{n}}{8\sqrt{2}}\right\}.
\]
By (\ref{eq:9}), Lemma \ref{lemma 3.3}, and Remark \ref{remark 3.7} we have
\begin{equation*}
\mathbb{P}(\mathcal{E}_1)\ge\mathbb{P}\left(H_{1,2}^\perp\cap\operatorname{Cons}(\delta,\rho)=\emptyset\right)\ge 1-e^{-c_{\ref{proposition 2.4}}n}.
\end{equation*}
Denote by $P_{1,2}:\R^{n}\to H_{1,2}$ the orthogonal projection onto $H_{1,2}.$ Let 
\[
\Tilde{Z}:=X_2-P_{1,2}X_2.
\]
Define 
\[
Z:=
\begin{cases}
\Tilde{Z}/\norm{\Tilde{Z}}_2,& \quad  \Tilde{Z}\neq 0\\
0,&  \quad \Tilde{Z}=0
\end{cases}
\]
By construction, $Z\in H_1\cap H_{1,2}^{\perp}$. Since $P_{1,2}$ is a measurable function of $X_3,\dots,X_n$, it follows that $Z$ is measurable with respect to $\sigma(X_2,\dots,X_n)$. In particular, $Z$ is independent of $X_1$. On the event $\mathcal{E}_2:=\{\dim(H_1)=n-1\}$, we have $X_2\notin H_{1,2}$, and hence $\Tilde{Z}\neq0$. Therefore, on $\mathcal{E}_2$, $Z$ is a unit vector in $H_1\cap H_{1,2}^{\perp}$. Since this space is one-dimensional, on the event $\mathcal{E}_0\subset\mathcal{E}_2$, the vectors $Z$ and $Y_2/\norm{Y_2}_2$ differ only by a sign. Consequently, on $\mathcal{E}_0$, we obtain
\[
a_2=\left|\left\langle X_1, {\frac{Y_2}{\norm{Y_2}_2}}\right\rangle\right|=|\ip{X_1}{Z}|.
\]
Thus,
\begin{align*}
\mathbb{P}(a_2\le \varepsilon\sqrt{p})&\le\mathbb{P}(|\ip{X_1}{Z}|\le \varepsilon\sqrt{p})\\
&\le \mathbb{P}(\{|\ip{X_1}{Z}|\le \varepsilon\sqrt{p} \} \cap \mathcal{E}_1\cap  \mathcal{E}_2)+ \mathbb{P}(\mathcal{E}_1^c)+\mathbb{P}(\mathcal{E}_2^c)\\
&\le \mathbb{P}(\{|\ip{X_1}{Z}|\le \varepsilon\sqrt{p} \} \cap \mathcal{E}_1\cap  \mathcal{E}_2)+e^{-c_{\ref{proposition 2.4}}n}+e^{-cn},
\end{align*} where the last inequality follows from
\[
\mathbb{P}(\mathcal{E}_1^c)\le e^{-c_{\ref{proposition 2.4}}n},\qquad \mathbb{P}(\mathcal{E}_2^c)\le \mathbb{P}(\mathcal{E}_0^c)\le e^{-cn}.
\]
Conditioning on the event $\mathcal{E}_1\cap \mathcal{E}_2$, we have
\[
\norm{Z}_2=1,\qquad \operatorname{CLCD}_{\mu n,\gamma}(Z)\ge\frac{\sqrt{\delta n}}{7},\qquad   \norm{\mathrm{D}(Z)}_2\ge\frac{\delta \rho \sqrt{n}}{8\sqrt{2}}.
\]
By Lemma \ref{lemma 3.4} and Remark \ref{remark 3.7}, for every $\varepsilon \ge 0$, there exists a constant $C>0$ depending only on $p,\delta,\rho$, and $\gamma$ such that 
\[
\mathbb{P}\left(|\ip{X_1}{Z}|\le\varepsilon\sqrt{p}|X_2,\dots,X_n\right)\le C\varepsilon +\frac{C}{\sqrt{d}}+Ce^{-4\mu^2d}.
\]
Then
\begin{align*}
\mathbb{P}(\{|\ip{X_1}{Z}|\le\varepsilon\sqrt{p}\}\cap  \mathcal{E}_1\cap \mathcal{E}_2)&=\mathbb{E}_{X_2,\dots,X_n}\mathbb{P}(\{|\ip{X_1}{Z}|\le\varepsilon\sqrt{p}\}\cap  \mathcal{E}_1\cap \mathcal{E}_2|X_2,\dots,X_n)\\&\le C\varepsilon +\frac{C}{\sqrt{d}}+Ce^{-4\mu^2d}.
\end{align*}
Finally, we obtain the estimate
\begin{align*}
\mathbb{P}(a_2\le \varepsilon\sqrt{p})&\le\mathbb{P}(\{|\ip{X_1}{Z}|\le\varepsilon\sqrt{p}\}\cap  \mathcal{E}_1\cap \mathcal{E}_2)+e^{-c_{\ref{proposition 2.4}}n}+e^{-cn}\\
&\le C\varepsilon +\frac{C}{\sqrt{d}}+Ce^{-4\mu^2d}+e^{-c_{\ref{proposition 2.4}}n}+e^{-cn},
\end{align*}
where $C,c_{\ref{proposition 2.4}}$, and $c$ depend only on $p$.

To show that $b_2$ is bounded from above with high probability, we follow an argument analogous to the estimate (\ref{eq:7}). For every $t>0$
\[
\mathbb{P}(b_2\ge t)=\mathbb{P}(b_2\ge t\ \text{and}\ \mathcal{E}_0)=\mathbb{P}(\operatorname{dist}(X_2,H_{1,2})\ge t\ \text{and}\ \mathcal{E}_0),
\]
where the last equality follows from the fact that, if $\mathcal{E}_0$ occurs, $1/\norm{Y_2}_2=\operatorname{dist}(X_2,H_{1,2})$ Then we obtain
\[
\mathbb{P}(b_2\ge t)\le\frac{2p}{t^2}.
\]

Combining the probability estimates for $a_2$ and $b_2$, we obtain that for every $\varepsilon\ge0$ and $t>0$,
\[
\mathbb{P}(a_2\le\varepsilon\sqrt{p}\, \text{ or }\, b_2\ge t\sqrt{2p})\le C\varepsilon +\frac{C}{\sqrt{d}}+Ce^{-4\mu^2d}+e^{-c_{\ref{proposition 2.4}}n}+e^{-cn}+\frac{1}{t^2}.
\]
Repeating the same argument for all $i=3,\dots,n$, we conclude that, for every $t>0$ and $\varepsilon \ge 0$,
\begin{equation}\label{eq:10}
\mathbb{P}\left(\frac{a_i}{b_i}\le\frac{\varepsilon}{\sqrt{2}t}\right)\le C\varepsilon +\frac{C_1}{\sqrt{d}}+\frac{1}{t^2},
\end{equation}
for all sufficiently large $n$, where the constants $C$ and $C_1$ depend only on $p$.
We also require the following proposition to estimate the sum of $(a_i/b_i)^2$ in (\ref{eq:8}).
\begin{proposition}[{\cite[Proposition 2.2]{theleast}}]\label{proposition 4.5}
Let $Z_i\ge0$, $i\in[n]$, be random variables. Then for every $\varepsilon>0$, we have 
\[
\mathbb{P}\left(\frac{1}{n}\sum_{i=1}^n Z_i\le \varepsilon\right)\le \frac{2}{n}\sum_{i=1}^n \mathbb{P}(Z_i\le 2\varepsilon).
\]
\end{proposition}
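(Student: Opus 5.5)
The statement to prove is Proposition~\ref{proposition 4.5}: for nonnegative random variables $Z_1,\dots,Z_n$ and any $\varepsilon>0$, $\mathbb{P}(\frac1n\sum_i Z_i\le\varepsilon)\le\frac2n\sum_i\mathbb{P}(Z_i\le 2\varepsilon)$. The plan is a counting argument combined with Markov's inequality, applied to the number of small summands. No independence between the $Z_i$ is needed, which matters because the variables $a_i/b_i$ from (\ref{eq:8}) are strongly dependent.

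First I will introduce the random count $N:=|\{i\in[n]: Z_i\le 2\varepsilon\}|=\sum_{i=1}^n \mathbf 1_{\{Z_i\le 2\varepsilon\}}$. By linearity of expectation, $\mathbb{E}N=\sum_{i=1}^n\mathbb{P}(Z_i\le 2\varepsilon)$. This identity is the only probabilistic input.

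Second, I will prove a deterministic implication: on the event $\{\frac1n\sum_i Z_i\le\varepsilon\}$ we have $N\ge n/2$. Indeed, every index with $Z_i>2\varepsilon$ contributes more than $2\varepsilon$ to the sum, and the $Z_i$ are nonnegative, so
\[
n\varepsilon\ \ge\ \sum_{i=1}^n Z_i\ \ge\ \sum_{i:\,Z_i>2\varepsilon} Z_i\ \ge\ 2\varepsilon\,(n-N).
\]
Hence $n-N\le n/2$, that is, $N\ge n/2$. Strictness is not needed; this is simply a Markov/Chebyshev count, and nonnegativity is exactly what lets us discard the terms with $Z_i\le2\varepsilon$.

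Third, I will apply Markov's inequality to the nonnegative variable $N$:
\[
\mathbb{P}\Bigl(\tfrac1n\textstyle\sum_i Z_i\le\varepsilon\Bigr)\le\mathbb{P}(N\ge n/2)\le\frac{2\,\mathbb{E}N}{n}=\frac2n\sum_{i=1}^n\mathbb{P}(Z_i\le2\varepsilon).
\]
This is the claimed bound. There is no genuine obstacle. The only point to watch is the boundary case $Z_i=2\varepsilon$, and the non-strict inequalities above already handle it.
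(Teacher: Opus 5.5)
Your argument is correct and complete. The deterministic implication $\frac1n\sum_i Z_i\le\varepsilon\Rightarrow N\ge n/2$, which uses nonnegativity and $\varepsilon>0$, followed by Markov's inequality for $N$, gives exactly the stated bound without any independence. The paper does not prove this itself but cites \cite[Proposition 2.2]{theleast}, whose proof is this same counting-plus-Markov argument.
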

Applying Proposition \ref{proposition 4.5} to $Z_i=(a_i/b_i)^2$ and using the bounds in (\ref{eq:10}), we deduce from (\ref{eq:8}) that
\begin{equation}\label{eq:11}
\mathbb{P}\left(\norm{(M_n^T)^{-1}x}_2\le\frac{\varepsilon\sqrt{n}}{\sqrt{2}t}\ \text{and}\ \mathcal{E}_0\right)\le4\sqrt{2}C\varepsilon+\frac{2C_1}{\sqrt{d}}+\frac{2}{t^2}.
\end{equation}
By using (\ref{eq:7}) and (\ref{eq:11}), we can complete our proof. For every $\varepsilon,s,t>0$,
\begin{align*}
\mathbb{P}\left(s_n(M_n)\le\frac{st}{\varepsilon}\sqrt{\frac{p}{n}}\right)&\ge\mathbb{P}\left(\norm{x}_2\le\frac{s\sqrt{p}}{\sqrt{2}}\ \mathrm{and}\ \norm{(M_n^T)^{-1}x}_2\ge\frac{\varepsilon\sqrt{n}}{\sqrt{2}t}\right)\\&\ge 1-\mathbb{P}\left(\norm{x}_2\ge\frac{s\sqrt{p}}{\sqrt{2}}\ \text{or}\ \norm{(M_n^T)^{-1}x}_2\le\frac{\varepsilon\sqrt{n}}{\sqrt{2}t}\right)\\&\ge1-\mathbb{P}\left(\norm{x}_2\ge\frac{s\sqrt{p}}{\sqrt{2}}\right)-\mathbb{P}\left(\norm{(M_n^T)^{-1}x}_2\le\frac{\varepsilon\sqrt{n}}{\sqrt{2}t}\ \text{and}\ \mathcal{E}_0\right)-\mathbb{P}(\mathcal{E}_0^c)\\&\ge1-\left(4\sqrt{2}C\varepsilon+\frac{2C_1}{\sqrt{d}}+\frac{2}{t^2}+\frac{2}{s^2}+2e^{-cn}\right)\\&\ge
1-C_p\left(\varepsilon+\frac{1}{\sqrt{d}}\right),
\end{align*}
where $C_p$ is a constant depending only on $p$, and we then choose $t=s=1/\sqrt{\varepsilon}$.
\printbibliography

\vspace{1.5cm}
\par\noindent
\textsc{Yuxiao Shi}
\\
\textit{Email address:} \texttt{yshi6821@gmail.com}

\par\noindent
Zhongtai Securities Institute for Financial Studies, Shandong University, Jinan, 250100, China.

\par\noindent
\textsc{Zeyan Song}
\\
\textit{Email address:} \texttt{zeyansong8@gmail.com}

\par\noindent
Zhongtai Securities Institute for Financial Studies, Shandong University, Jinan, 250100, China.

\par\noindent
\textsc{Kexin Yu}
\\
\textit{Email address:} \texttt{Kexinyu01@gmail.com}

\par\noindent
School of Mathematics, Shandong University, Jinan, 250100, China.

\par\noindent
\textsc{Tingzhou Yu} 
\\
\textit{Email address:} \texttt{tingzho1@ualberta.ca}

\par\noindent
Department of Mathematical and Statistical Sciences, University of Alberta, Edmonton, AB, T6G 2R3, Canada.

\end{document}